\def \[{\begin{equation}}
\def \]{\end{equation}}
\newtheorem{thm}{Theorem}[section]
\newtheorem{prop}[thm]{Proposition}
\newtheorem{defi}[thm]{Definition}
\newtheorem{claim}{Claim}
\newtheorem{case}{Case}
\newtheorem{lem}[thm]{Lemma}
\newtheorem{cor}[thm]{Corollary}
\newtheorem{conj}[thm]{Conjecture}
\begin{document}

\setlength{\baselineskip}{20pt}
\begin{center}

{\Large \bf The minimum degree of minimal $k$-factor-critical claw-free graphs$^{\text{\ding{73}}}$}

\vspace{4mm}

{Jing Guo$^{\rm 1}$, Qiuli Li$^{\rm 1}$, Fuliang Lu$^{\rm 2}$, Heping Zhang$^{\rm 1 \ast}$}

\vspace{4mm}

\footnotesize{$^{\rm 1}$School of Mathematics and Statistics, Lanzhou University,
Lanzhou, 730000, PR China}

\footnotesize{$^{\rm 2}$School of Mathematics and Statistics, Minnan Normal University,
Zhangzhou, 363000, PR China}
\renewcommand\thefootnote{}
\footnote{$^{\text{\ding{73}}}$ This work is supported by NSFC\,(Grant No. 12271229 and 12271235) and
NSF of Fujian Province (Grant No. 2021J06029).}

\footnote{$^{\ast}$ The corresponding author. \\E-mail addresses:
guoj20@lzu.edu.cn (J. Guo), qlli@lzu.edu.cn (Q. Li), flianglu@163.com (F. Lu)
and zhanghp@lzu.edu.cn (H. Zhang).}

\end{center}

\noindent {\bf Abstract}:
A graph $G$ of order $n$ is said to be $k$-factor-critical for integers $1 \leq k< n$,
if the removal of any $k$ vertices results in a graph with a perfect matching.
A $k$-factor-critical graph is minimal if the deletion of any edge results in a graph
that is not $k$-factor-critical. In 1998, O. Favaron and M. Shi conjectured that
every minimal $k$-factor-critical graph has minimum degree $k+1$.
In this paper, we confirm the conjecture for minimal $k$-factor-critical claw-free graphs.
Moreover, we show that every minimal $k$-factor-critical claw-free graph $G$
has at least $\frac{k-1}{2k}|V(G)|$ vertices of degree $k+1$
in the case that $G$ is $(k+1)$-connected, yielding a further evidence for
S. Norine and R. Thomas' conjecture on the number of vertices of minimum degree of minimal bricks
for $k=2$.

\vspace{2mm}
\noindent{\bf Keywords}: Minimal $k$-factor-critical graph;
Minimal brick; Claw-free graph; Perfect matching; Minimum degree
\vspace{2mm}

\noindent{AMS subject classification:} 05C70,\ 05C07

 {\setcounter{section}{0}
\section{Introduction}\setcounter{equation}{0}

All graphs considered in this paper are finite and simple.
Let $G$ be a graph with vertex set $V(G)$ and edge set $E(G)$.
The {\em order} of a graph $G$ is the cardinality of $V(G)$.
A {\em perfect matching} $M$ of $G$ is a set of edges
such that each vertex is incident with exactly one edge of $M$.
A {\em factor-critical} graph is a graph in which
the removal of any vertex results in a graph with a perfect matching.
A graph $G$ with at least one edge is called {\em bicritical} if,
after the removal of any pair of distinct vertices of $G$,
the resulting graph has a perfect matching.
The concepts of factor-critical and bicritical graphs were introduced
by T. Gallai \cite{GT} and L. Lov\'{a}sz \cite{LL}, respectively.
In the matching theory, factor-critical graphs and bicritical graphs are
two basic building blocks in decompositions of graphs.

O. Favaron \cite{F} and Q. Yu \cite{Y} independently introduced
$k$-factor-critical graphs as a generalization of factor-critical and
bicritical graphs. A graph $G$ of order $n$ is said to
be {\em $k$-factor-critical} for integers $1 \leq k< n$,
if the removal of any $k$ vertices results in a graph with a perfect matching.
They also gave the following basic property on connectivity
of $k$-factor-critical graphs.

\begin{thm}[\cite{F, Y}]\label{conn}
If $G$ is a $k$-factor-critical graph of order $n$,
then $G$ is $k$-connected, $(k+1)$-edge-connected,
and $(k-2)$-factor-critical whenever $k \geq 2$.
\end{thm}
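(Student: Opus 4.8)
The plan is to reduce all three assertions to a single principle: in a $k$-factor-critical graph one cannot force an odd component by deleting few vertices. Two preliminary facts drive everything. First, the parity constraint $n \equiv k \pmod 2$ is forced, since $G-W$ must have a perfect matching for $|W|=k$. Second, I would prove the minimum-degree bound $\delta(G)\ge k+1$ directly: if $\deg(v)\le k$, then (using $n\ge k+1$) delete a $k$-set $W$ with $N(v)\subseteq W$ and $v\notin W$; now $v$ is isolated in $G-W$, an odd component, contradicting $k$-factor-criticality. I would also record the inequality $o(G-Y)\le |Y|-k$ for every $Y$ with $|Y|\ge k$ (pick $T\subseteq Y$ with $|T|=k$, apply Tutte's theorem to the matchable graph $G-T$ with separating set $Y\setminus T$, noting $(G-T)-(Y\setminus T)=G-Y$), where $o$ counts odd components. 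The bound $\delta(G)\ge k+1$ is the enabling lemma that eliminates the degenerate small configurations which would otherwise break the parity arguments below.

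For $k$-connectedness, suppose $S$ is a vertex cut with $t:=|S|\le k-1$, and let $C_1,\dots,C_m$ ($m\ge 2$) be the components of $G-S$. Since every neighbor of a vertex of $C_i$ lies in $C_i\cup S$, the bound $\delta\ge k+1$ gives $|C_i|\ge k+2-t=r+2$, where $r:=k-t\ge 1$. Writing $A:=C_1$ and $B:=\bigcup_{i\ge 2}C_i$ (no edges between them), I would delete $S$, then delete $j\in\{0,1\}$ vertices from $A$ and $r-j$ from $B$, choosing $j$ so that $|A|-j$ is odd; this is possible because $|A|\ge r+2\ge 2$ and $|B|\ge r+2>r$. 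The total deletion is exactly $k$, and what remains of $A$ is separated from everything else and has odd order, hence contains an odd component — contradicting $k$-factor-criticality.

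For $(k+1)$-edge-connectedness, note $\lambda(G)\ge \kappa(G)\ge k$ by the previous part, so it suffices to exclude $\lambda=k$. Take a minimum edge cut $[A,\bar A]$ with exactly $k$ edges. A short degree count ($\sum_{v\in A}\deg v = 2e(A)+k\ge (k+1)|A|$ together with $2e(A)\le |A|(|A|-1)$, giving $|A|^2-(k+2)|A|+k\ge 0$) forces $|A|,|\bar A|\ge k+2$. The $k$ cut edges form a bipartite graph with a minimum vertex cover $Z$ of size $\tau\le k$; deleting $Z$ separates the two sides, and I would spend the leftover $k-\tau$ deletions, together with the freedom in how the deleted set splits across the sides, to make the $A$-side retain \emph{odd} total order. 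Concretely, when $\tau=k$ the cut is a perfect matching $\{a_ib_i\}$ and I take a mixed cover $\{a_1,\dots,a_j\}\cup\{b_{j+1},\dots,b_k\}$ with $|A|-j$ odd; when $\tau<k$ I delete $Z$, adjust the $A$-side parity with one further vertex, and discard the rest of the budget into the large side $\bar A$. In every case, after exactly $k$ deletions the $A$-side is separated with odd order, hence has an odd component — a contradiction. I expect this reconciliation of the leftover deletion budget with the parity requirement to be the main obstacle, since an edge cut yields a separation only after a vertex cover is removed.

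For $(k-2)$-factor-criticality ($k\ge 2$), fix $A$ with $|A|=k-2$; then $|V(G-A)|=n-k+2$ is even, and by Tutte's theorem it suffices to show $o(G-A-X)\le |X|$ for all $X\subseteq V(G)\setminus A$. For $|X|\ge 2$ this is immediate from the recorded inequality, since $|A\cup X|\ge k$ gives $o(G-A-X)\le |X|-2$. For $|X|=1$, if $o(G-A-X)\ge 2$ I pick a vertex $u$ lying outside one odd component $C$, set $W=A\cup X\cup\{u\}$ with $|W|=k$; then $C$ survives intact as an odd component of $G-W$, a contradiction, so $o\le 1$. For $|X|=0$ the same ``keep one odd component intact'' idea applies, padding to a $k$-set using two vertices outside a fixed odd component $C$; the only configuration where two such vertices fail to exist is when $G-A$ is one odd component plus a single isolated vertex $z$, and there $\deg_G(z)\le |A|=k-2<k+1\le \delta(G)$, impossible. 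Hence $o(G-A)=0$, and $G-A$ has a perfect matching, completing the proof.
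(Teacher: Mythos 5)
The paper itself offers no proof of Theorem~\ref{conn}: it is quoted as a known result of Favaron \cite{F} and Yu \cite{Y}, so there is no in-paper argument to compare yours against. Judged on its own merits, your proof is correct, and it follows the natural (and essentially the original) route: all three statements are reduced to the single parity principle that deleting exactly $k$ vertices can never leave an odd component. I checked the pivotal steps. The preliminary facts ($n\equiv k \pmod 2$, $\delta(G)\ge k+1$, and $o(G-Y)\le |Y|-k$ for $|Y|\ge k$ via the easy direction of Tutte's theorem) are sound. In the connectivity part, $|C_i|\ge k+2-t$ gives exactly the room needed to place $j\in\{0,1\}$ deletions on the $A$-side and $r-j$ on the $B$-side. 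In the edge-connectivity part, the quadratic $|A|^2-(k+2)|A|+k\ge 0$ indeed forces $|A|\ge k+2$ (it is negative at $|A|=1$ and at $|A|=k+1$), and the cover-based budget argument closes correctly in both cases: when $\tau=k$ the $k$ cut edges are pairwise disjoint by K\H{o}nig's theorem, so the mixed transversal with $j\in\{0,1\}$ exists, and when $\tau<k$ the surplus $k-\tau\ge 1$ absorbs the parity adjustment while $|A|,|\bar{A}|\ge k+2$ guarantees both remnants are nonempty; this was the one step a referee would pause on, and it works. In the $(k-2)$-factor-critical part, the cases $|X|\ge 2$, $|X|=1$, $|X|=0$ exhaust the Tutte condition for $G-A$, and the degenerate configuration in the $|X|=0$ case (one odd component plus an isolated vertex $z$) is correctly killed by $d_G(z)\le k-2<\delta(G)$; note also that the case $G-A$ equal to a single odd component is impossible by parity, as you implicitly use. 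The argument is complete.
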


For further results concerning $k$-factor-critical graphs,
 the reader may refer to \cite{PS, PMD, N, LP, LDYQ, YL}.
Given a vertex $u$ of $G$, a vertex $v$ is called a {\em neighbor} of $u$
if $uv\in E(G)$. Let $N(u)$ denote the {\em neighborhood},
the set of neighbors of vertex $u$ in $G$,
and $d_{G}(u)$ the {\em degree} of $u$ in $G$,
the cardinality of $N(u)$.
The {\em minimum degree} of $G$, denoted by $\delta(G)$,
is the minimum value in degrees of all vertices of $G$.

Theorem \ref{conn} indicates that every $k$-factor-critical graph
has minimum degree at least $k+1$. O. Favaron and M. Shi \cite{FS}
focused on minimal $k$-factor-critical graphs.
A $k$-factor-critical graph $G$ is {\em minimal} if $G-e$ is
not $k$-factor-critical for each  edge $e$ of $G$.
They obtained that every minimal $k$-factor-critical graph of
order $n$ has minimum degree $k+1$ for $k=1, n-6, n-4$ and $n-2$, and
%from the ear decomposition of the factor-critical graph
%(see \cite{LP}, Theorem 5.5.1).
 put forward a general problem:  is it true  that  every minimal
$k$-factor-critical graph of order $n$ has minimum degree $k+1$?
In \cite{ZWL}, Z. Zhang et al. redescribed the problem as the following conjecture.

\begin{conj}[\cite{FS, ZWL}]\label{conj1}
Let $G$ be a minimal $k$-factor-critical graph of order $n$ with $1 \leq k<n$.
Then $\delta(G)=k+1$.
\end{conj}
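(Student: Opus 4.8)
The plan is to argue by contradiction through a minimal counterexample. Since Theorem \ref{conn} already gives $\delta(G)\ge k+1$ for every $k$-factor-critical graph, the content of Conjecture \ref{conj1} is the existence of a single vertex of degree exactly $k+1$; equivalently, I would show that if a $k$-factor-critical graph $G$ has $\delta(G)\ge k+2$ then $G$ is \emph{not} minimal, i.e.\ some edge is removable. So let $G$ be a minimal $k$-factor-critical graph with $\delta(G)\ge k+2$, chosen with $|V(G)|$ and then $|E(G)|$ least. The first step is to translate removability combinatorially. An edge $e=uv$ is removable (meaning $G-e$ is still $k$-factor-critical) if and only if for every $k$-set $S$ with $u,v\notin S$ the graph $G-S$ has a perfect matching avoiding $e$; when $S$ meets $\{u,v\}$ nothing is lost, since $e$ disappears with its endpoint. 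As $G-S$ always has a perfect matching, removability of $e$ is exactly the statement that $e$ is \emph{never a forced edge} in any such $G-S$. Minimality thus reads: for every edge $e=uv$ there is a witness $k$-set $S_e$ (avoiding $u,v$) in which $e$ lies in \emph{every} perfect matching of $G-S_e$.

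The second step extracts local structure from forcedness. Since $e=uv$ is forced in $H_e:=G-S_e$, the graph $H_e-e$ has no perfect matching while $H_e$ does; Tutte's theorem then yields a barrier $B_e\subseteq V(H_e)$ with $o(H_e-B_e)=|B_e|$ in which $u$ and $v$ sit in one even component that $e$ splits into two odd pieces. Fixing a minimum-degree vertex $w$, so $d_G(w)\ge k+2$, I would analyse the edges at $w$ together: for each neighbour $x_i$ the witness $S_i$ forces $wx_i$ in $G-S_i$, which is equivalent to $(G-S_i)-w-x_j$ having no perfect matching for every surviving neighbour $x_j\neq x_i$ of $w$. The strategy is a \emph{rerouting} argument. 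Starting from a perfect matching of $G-S$ using a forced edge $wx_i$, the surplus of neighbours guaranteed by $d_G(w)\ge k+2$ (since $w\notin S_i$ and $|S_i|=k$, at least two neighbours of $w$ survive in $G-S_i$) should supply an alternative neighbour together with an alternating path that reroutes $w$ away from $x_i$, contradicting forcedness — unless every such alternative is blocked by the barrier.

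For the descent I would use the connectivity consequences of Theorem \ref{conn}: $G$ is $k$-connected, $(k+1)$-edge-connected, and $(k-2)$-factor-critical when $k\ge 2$. This suggests an induction in steps of two on $k$, anchored at $k=1$ (factor-critical), already settled in \cite{FS}. Concretely, one relates $G$ to the $(k-2)$-factor-critical graph $G-u-v$ on two fewer vertices obtained by deleting a well-chosen adjacent pair, and then transfers a removable edge back through the correspondence. At the base of the parity ladder, $k=2$ is exactly the bicritical setting: a $3$-connected bicritical graph is a brick, and the known existence of a degree-$3$ vertex in every minimal brick (the qualitative form of the Norine–Thomas conjecture cited in the abstract) gives $\delta=3=k+1$, providing both a base case and a template for the rerouting-versus-barrier dichotomy.

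The step I expect to be the genuine obstacle is \emph{coherence of the witnesses}. The forced-edge data are purely local: each edge $e$ carries its own set $S_e$ and barrier $B_e$, and these may vary wildly from edge to edge, so there is no single barrier governing all forced edges at once. The rerouting argument fails precisely when, for every candidate edge, some barrier obstructs every alternating reroute, and excluding this uniformly over all $k$-sets $S$ is what makes the conjecture delicate in general. What one really needs is to force the local barriers to cohere into a global separating structure whose size is bounded below by the degrees, thereby contradicting $(k+1)$-edge-connectivity; achieving this coherence without extra control on the local neighbourhood structure of $G$ is, I believe, the crux of the problem.
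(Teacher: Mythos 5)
You have not proved the statement, and you should be aware that neither does the paper: Conjecture \ref{conj1} is presented there as an open conjecture, with the paper's contribution (Theorem \ref{main1}) being a proof only under the added claw-free hypothesis; the general case is known just for $k=1$, $k=2$, and $k\in\{n-2,n-4,n-6,n-8,n-10\}$ via the cited literature. Your own final paragraph concedes the decisive gap: the ``coherence of the witnesses'' step, i.e.\ welding the edge-local data $(S_e,B_e)$ into a single global separating structure that contradicts $(k+1)$-edge-connectivity, is exactly where the argument stops, and the rerouting machinery you set up earlier does not substitute for it. What you do establish soundly --- the forced-edge reformulation of minimality and the barrier extraction via Tutte's theorem, giving $o(H_e-B_e)=|B_e|$ with $e$ splitting one even component into two odd pieces --- only re-derives the paper's Theorem \ref{minimal}, the Favaron--Shi characterization that is the common starting point of all work on this problem, so the proposal contains no advance past the known base camp.

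Beyond the admitted gap, two concrete steps fail as written. First, the parity descent through $G-u-v$ has no mechanism: Theorem \ref{conn} gives that $G-u-v$ is $(k-2)$-factor-critical, but says nothing about its \emph{minimality}, so the induction hypothesis is inapplicable; moreover an edge that is removable in $G-u-v$ (never forced in any $(G-u-v)-S$) need not be removable in $G$, and you give no transfer argument. Second, your base case $k=2$ is incorrect as stated: a minimal bicritical graph is only guaranteed $2$-connected, hence need not be a brick, so the degree-three results for minimal bricks (which are due to de Carvalho, Lucchesi and Murty \cite{CLM}, not ``the qualitative form'' of the Norine--Thomas conjecture \cite{NT}) do not apply directly; the actual $k=2$ resolution in \cite{GZ3} required a brick decomposition across $2$-separations precisely to handle the non-$3$-connected case. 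It is instructive to see how the paper evades your coherence obstacle: claw-freeness pins the minimum witness to $|S_e|\in\{k,k+1\}$ and classifies every edge at a high-degree pair into two rigid types (Lemma \ref{lem1}, Definition \ref{def1}), after which one can compare two Property $Q$ cuts $X_{e}^{u}$ and $X_{e'}^{u}$ at a common vertex, chosen to minimize the order of the separated odd component, and extract a claw or a matching obstruction from any discrepancy. Nothing in your sketch supplies a comparable rigidity for general $G$, so the proposal is best read as an accurate diagnosis of why the conjecture is hard rather than progress toward proving it.
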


Recently, a novel method was used to confirm that
Conjecture \ref{conj1} is true for $k=n-8$ and $n-10$
(see \cite{GZ1, GZ2}). For $k=2$, J. Guo et al. \cite{GZ3}
showed that every minimal bicritical graph with four or more vertices
has at least four vertices of degree three by using a
brick decomposition on bicritical
graphs with a 2-separation (see \cite{ZWY22}) and recent results on minimal bricks.
Q. Li, F. Lu and H. Zhang \cite{LLZH} confirmed Conjecture \ref{conj1} for planar graphs.

\begin{thm}[\cite{LLZH}]\label{main01}
Let $G$ be a minimal $k$-factor-critical planar graph. Then $\delta(G)=k+1$.
\end{thm}

A {\em brick} is a 3-connected bicritical graph \cite{ELW},
which plays a key role in matching theory of graphs.
J. Edmonds et al. \cite{ELW} and L. Lov\'{a}sz \cite{LO} proposed and developed
the ``tight cut decomposition" of matching covered graphs into
list of bricks and braces in an essentially unique manner.
This decomposition reduces several problems from matching theory to bricks
(e.g. a graph is Pfaffian if and only if its
bricks and braces are Pfaffian \cite{LR}).

A brick $G$ is {\em minimal} if $G-e$ is not a brick
for every edge $e$ of $G$.
One may easily deduce that
a 3-connected minimal bicritical graph $G$ is also a minimal brick
since, for any edge $e$ of $G$,
$G-e$ is not bicritical, yielding $G-e$ is not a brick.
But the converse does not always hold. For example, Fig. \ref{tu-01}
presents a minimal brick $G$ rather than
a minimal bicritical graph as $G-e$ is also bicritical.

\begin{figure}[h]
\centering
\includegraphics[height=2.9cm,width=5.3cm]{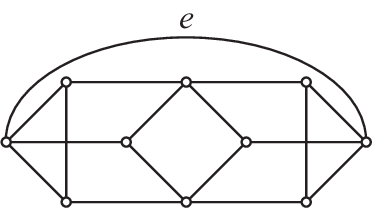}
\caption{\label{tu-01} A minimal brick $G$ rather than
a minimal bicritical graph.}
\end{figure}

There has been much research on the minimum degrees of minimal bricks.
M. H. de Carvalho et al. \cite{CLM} proved that
every minimal brick contains a vertex of degree three.
S. Norine and R. Thomas \cite{NT} presented a recursive procedure
for generating minimal bricks and showed that every minimal brick has
at least three vertices of degree three, and then went on to pose
the following conjecture.

\begin{conj}[\cite{NT}]\label{conj2}
There exists $\alpha>0$ such that every minimal brick $G$
has at least $\alpha|V(G)|$ vertices of degree three.
\end{conj}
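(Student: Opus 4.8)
The plan is to recast the conjecture as a linear density bound and then attack that bound by induction over a structural generation of minimal bricks, reinforced by a global discharging step. The starting observation is a crude degree count. If a minimal brick $G$ has $t$ vertices of degree three and every remaining vertex has degree at least four, then
\begin{equation*}
2|E(G)| \;=\; \sum_{v \in V(G)} d_G(v) \;\ge\; 3t + 4\bigl(|V(G)| - t\bigr),
\end{equation*}
so that $t \ge 4|V(G)| - 2|E(G)|$. Consequently it would suffice to establish a linear \emph{upper} bound $|E(G)| \le (2-\epsilon)|V(G)|$ for some fixed $\epsilon > 0$, since this forces $t \ge 2\epsilon|V(G)|$, i.e. the conjecture with $\alpha = 2\epsilon$. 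The count is exactly tight on the cubic base bricks $K_4$, $\overline{C_6}$ (the triangular prism) and the Petersen graph, where every vertex has degree three; so the entire difficulty is concentrated in controlling the vertices of degree at least four.

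To control edge density I would induct using the recursive generation of minimal bricks of Norine and Thomas \cite{NT}: every minimal brick is obtained from a bounded family of base bricks by a bounded repertoire of expansion operations, each of which inserts only a bounded number of vertices and edges. Along the generation sequence I would carry the potential $\Phi(G) = t(G) - \alpha|V(G)|$, where $t(G)$ counts the degree-three vertices, verify $\Phi \ge 0$ on the base bricks (where it holds for any $\alpha \le 1$ since they are cubic), and try to show that each expansion changes $\Phi$ by a nonnegative amount. In the density formulation this is the requirement that every operation contributes at most $2 - \epsilon$ new edges per new vertex on average.

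The genuine work is that an expansion may raise an existing degree-three vertex above degree three, destroying it, while simultaneously inserting vertices of large degree, so a single step can decrease $\Phi$. To absorb such losses I would pass from a purely local induction to a global discharging argument: assign each vertex a charge equal to its degree, move the surplus from each vertex of degree at least four onto nearby degree-three vertices, and prove that every high-degree vertex can offload its surplus onto a private linear quota of degree-three vertices. The mechanism guaranteeing a plentiful local supply of degree-three vertices is minimality itself — for every edge $e$ the graph $G - e$ fails to be a brick — together with the theorem of de Carvalho, Lucchesi and Murty \cite{CLM} furnishing a degree-three vertex in every minimal brick, applied to the many smaller minimal bricks arising as intermediate stages of the generation.

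The main obstacle, and the reason the statement remains a conjecture, is precisely this global control: a priori the expansion operations could pack vertices of large degree so densely that $|E(G)|$ approaches $2|V(G)|$, which renders the counting bound vacuous and leaves the discharging with no slack. Excluding such behaviour appears to demand a proof that minimal bricks contain no large dense substructure, something no generation-based induction currently supplies uniformly. The claw-free restriction treated in this paper is exactly the regime where this clustering cannot occur — the neighbourhoods of a claw-free minimal brick are tightly constrained, so the required local abundance of degree-three vertices can be exhibited directly — and this is what yields the linear bound (for $k=2$, at least $\frac14|V(G)|$ vertices of degree three) in the claw-free case while the general conjecture stays out of reach.
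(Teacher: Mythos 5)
You were asked about Conjecture \ref{conj2}, and the first thing to say is that it is an \emph{open conjecture}: the paper quotes it from Norine and Thomas and does not prove it. The paper's own contribution is only supporting evidence --- Theorem \ref{main2} with $k=2$ gives $\frac{1}{4}|V(G)|$ cubic vertices for $3$-connected minimal bicritical claw-free graphs, which form a proper subclass of minimal bricks (the example in Fig.~\ref{tu-01} is a minimal brick that is not minimal bicritical), and its method never touches bricks at all: it runs through the Favaron--Shi characterization (Theorem \ref{minimal}), a claw-free classification of edges into two types, and the forest bound of Claim 1 in Section 4. Your text, by your own admission in the closing paragraph, is a research programme rather than a proof: the load-bearing step --- that each expansion in the Norine--Thomas generation keeps the potential $\Phi$ nonnegative, or that the discharging can always be completed --- is exactly what is missing, so as a proof attempt it fails at its only essential point. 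Saying ``each high-degree vertex can offload its surplus onto a private linear quota of degree-three vertices'' is the conjecture restated locally, not a mechanism; and invoking de Carvalho--Lucchesi--Murty on intermediate stages of the generation supplies one cubic vertex per stage with nothing preventing later operations from destroying them.

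There is also a concrete defect beyond incompleteness: your opening reduction aims at a false target. You claim it would suffice to prove $|E(G)| \le (2-\epsilon)|V(G)|$ for minimal bricks. Odd wheels refute this for every $\epsilon > 0$: the wheel $W$ on $n$ vertices (a hub joined to an odd rim cycle) is a $3$-connected bicritical graph, hence a brick, and it is a \emph{minimal} brick because every edge of $W$ is incident with a rim vertex of degree three, so $W - e$ contains a vertex of degree two and is not $3$-connected; yet $|E(W)| = 2n-2$. The conjecture itself holds trivially for wheels, since all $n-1$ rim vertices are cubic --- which shows that your counting inequality $t \ge 4|V(G)| - 2|E(G)|$ is too lossy to carry any argument: it is vacuous precisely on natural families of minimal bricks with many high-degree-adjacent edges. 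Any viable approach must therefore argue for cubic vertices directly rather than through a global edge-density bound; this is in fact what the paper does in its restricted setting, where claw-freeness forces the local structure (Lemma \ref{lem1}) that makes the direct count possible, and even there the claw-free case of Conjecture \ref{conj2} for minimal bricks proper remains unsettled.
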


Furthermore, F. Lin et al. \cite{LZL} showed the existence of four such vertices.
X. He and F. Lu \cite{HL} proved that every solid minimal brick $G$
has at least $\frac{2}{5}|V(G)|$ vertices of degree three,
which partly supports Conjecture \ref{conj2}.

A graph is said to be {\em claw-free} if it does not contain any
induced subgraph isomorphic to the complete bipartite graph $K_{1,3}$.
Claw-free graphs have been a popular condition to
study for many graphic parameters.
In this paper, we confirm Conjecture \ref{conj1} for all minimal
$k$-factor-critical claw-free graphs. Moreover,
we show that if the connectivity of $G$ is increased
(to $(k+1)$-connected), then $G$ has at least $\frac{k-1}{2k}|V(G)|$ vertices of degree $k+1$.
The following theorems are our main results.

\begin{thm}\label{main1}
Let $G$ be a minimal $k$-factor-critical claw-free graph. Then $\delta(G)=k+1$.
\end{thm}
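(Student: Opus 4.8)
The plan is to establish the two inequalities $\delta(G)\ge k+1$ and $\delta(G)\le k+1$ separately. The lower bound is immediate from the discussion following Theorem~\ref{conn}: if some vertex $x$ had $d_G(x)=k$, then deleting the $k$ vertices of $N(x)$ would leave $x$ isolated, contradicting that $G-N(x)$ has a perfect matching. So the real content is the upper bound, i.e. producing a vertex of degree exactly $k+1$. I argue by contradiction and assume $\delta(G)\ge k+2$ throughout.

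First I would convert minimality into a local matching obstruction. Fix any edge $e=xy$. Since $G$ is $k$-factor-critical, $G-W$ has a perfect matching for every $k$-set $W$; since $G-e$ is not $k$-factor-critical, there is a $k$-set $W$ with $x,y\notin W$ such that $(G-e)-W$ has no perfect matching while $G-W$ does. Thus $xy$ is a \emph{forced} edge of $G-W$, lying in every perfect matching. Applying Tutte's theorem to $G-W-xy$ and comparing odd-component counts before and after deleting the single edge $xy$, one obtains a set $B\subseteq V(G)\setminus W$ with $x,y\notin B$ such that $o((G-W)-B)=|B|$, the component $C$ of $(G-W)-B$ containing $x$ and $y$ is even, and $xy$ is a bridge of $C$ whose deletion splits $C$ into two odd parts $X\ni x$ and $Y\ni y$. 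In particular $xy$ is the only edge of $(G-W)-B$ between $X$ and $Y$.

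Next I would squeeze this structure with the claw-free hypothesis; three facts drive the argument. (i) Since $y$ is joined to $X$ only through $x$, the vertex $y$ has no neighbour in $N(x)\cap X$; hence two non-adjacent vertices of $N(x)\cap X$ would form a claw with $x$ and $y$, so $N(x)\cap X$ is a clique (and symmetrically for $y$). (ii) No vertex of $B$ can have neighbours in three distinct components of $(G-W)-B$, as such neighbours are pairwise non-adjacent and would form a claw; thus each barrier vertex meets at most two components. (iii) Every odd component of $(G-W)-B$ sends at least one edge to $B$ (otherwise it would be an odd component of $G-W$, contradicting that $G-W$ has a perfect matching), and, since $G$ is $(k+1)$-edge-connected by Theorem~\ref{conn}, each such component $O\ne V(G)$ satisfies $e(O,\,B\cup W)\ge k+1$.

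Finally I would run an extremal argument: among all edges $e$ and all associated triples $(W,B,X)$ as above, choose one minimizing $|X|$ and analyze the vertices of this smallest odd side together with their few outside neighbours. The goal is to force, via (i)--(iii) and $\delta(G)\ge k+2$, three pairwise non-adjacent neighbours somewhere -- a claw -- or a vertex whose $\ge k+2$ neighbours all lie in $W\cup B$ in a configuration that again yields a claw, contradicting the claw-free hypothesis. I expect the main obstacle to be precisely this endgame: controlling how the $\ge k+2$ neighbours of a boundary vertex distribute among $W$, $B$, and the components, since $W$ (of size $k$) can absorb many edges and $G-W$ need not be connected. Handling this cleanly will require a careful choice of $W$ and $B$ (for instance taking $B$ maximal or $|X|$ minimal) and a case distinction according to whether $X$ is a single vertex, which is where the bulk of the technical work lies.
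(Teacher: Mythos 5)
Your lower bound and your translation of minimality into a separator structure are both fine: the triple $(W,B,X)$ you construct is exactly the Favaron--Shi characterization that the paper quotes as Theorem~\ref{minimal} (with $S_e=W\cup B$), and your claw-free observations (i)--(iii) correspond to arguments that do appear in the paper. But the proposal stops precisely where the real proof begins, and the part you defer as ``technical work'' is not routine. The first missing ingredient is the structural dichotomy of Lemma~\ref{lem1}: for a \emph{smallest} separator $S_e$, either $|S_e|=k$ (i.e.\ $|B|=0$ in your notation) and both odd sides are nontrivial, or $|S_e|=k+1$ and the configuration is completely rigid ($X$ and $Y$ are single vertices, and every vertex of $S_e$ is adjacent to $x$, to $y$, and to the third odd component). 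Proving this requires the minimality of $S_e$ plus an edge-counting argument between $S_e$ and the contracted odd components; nothing in your sketch produces it, and without it the separator $W\cup B\cup\{y\}$ isolating your side $X$ has uncontrolled size $k+|B|+1$, so there is no fixed-size family of cuts over which your extremal choice can even be made. The rigid $|S_e|=k+1$ case (``type 2'') is also exactly the ``$X$ is a single vertex'' case you flag, and the paper needs a separate statement (Proposition~\ref{prop2}: each vertex lies on at most one type-2 edge) to guarantee that inside the smallest odd side one can always find a type-1 edge to iterate with.

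Second, and more seriously, one minimal triple $(W,B,X)$ plus claw-freeness does not yield a contradiction, and the paper's proof shows why: after choosing the $(k+1)$-cut whose odd side $G_1$ is smallest, one must pick a neighbour $y$ of $u$ inside $G_1$, build a \emph{second} cut of the same kind from the edge $uy$, and analyze how the two cuts cross (the quadrants $I_0,I_1,I_2,I_3$ and a sequence of nine counting claims, which use the minimality of $G_1$ repeatedly to pin down all the intersection sizes); even then the contradiction is only reached after a \emph{third} cut, coming from a second neighbour $z$ of $u$ in $G_1$, is intersected with the first, producing an odd component $G[H_1\cup\{y,z\}]$ that survives the deletion of a $k$-set and violates $k$-factor-criticality. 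Your plan of ``analyzing the vertices of the smallest odd side together with their few outside neighbours'' contains no mechanism for forcing either a claw or a parity violation: the forced claws in the paper all arise from the interaction of two or three crossing separators, not from a single configuration. So the proposal is a correct setup that matches the paper's strategy in outline, but the core of the proof --- the dichotomy lemma and the crossing-cuts endgame --- is absent, as you in effect acknowledge.
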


\begin{thm}\label{main2}
Let $G$ be a minimal $k$-factor-critical claw-free graph and $k\geq 2$.
If $G$ is $(k+1)$-connected, then $G$ has at least
$\frac{k-1}{2k}|V(G)|$ vertices of degree $k+1$.
\end{thm}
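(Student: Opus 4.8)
The plan is to combine the existence of degree-$(k+1)$ vertices furnished by Theorem \ref{main1} with a structural analysis of the minimality condition, and then to convert the resulting local information into a global count by a discharging argument. First I would classify $V(G)$ into the set $V_{\mathrm{small}}$ of vertices of degree exactly $k+1$ and the set $V_{\mathrm{large}}$ of vertices of degree at least $k+2$, and assign to each vertex $v$ the charge $d_G(v)-(k+1)\ge 0$, which vanishes precisely on $V_{\mathrm{small}}$. The target inequality $|V_{\mathrm{small}}|\ge\frac{k-1}{2k}|V(G)|$ is equivalent to an upper bound on the total charge $2|E(G)|-(k+1)|V(G)|$ in terms of $|V_{\mathrm{small}}|$, so the problem reduces to showing that large-degree vertices cannot be too abundant and must be ``paid for'' by nearby small vertices.

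The key local input comes from minimality. For an edge $e=uv$, since $G-e$ is not $k$-factor-critical while $G$ is, there is a set $S_e$ of $k$ vertices, with $u,v\notin S_e$, such that $uv$ lies in every perfect matching of $H:=G-S_e$. Applying Tutte's theorem to $H-e$ and using that $H$ itself has a perfect matching, I would show that there is a tight barrier $T$ (disjoint from $\{u,v\}$) for which $u$ and $v$ lie in a common even component $C$ of $H-T$, and $uv$ is a bridge of $C$ splitting it into two odd parts $C_u\ni u$ and $C_v\ni v$. Now claw-freeness localizes this: for any two neighbours $a,b$ of $u$ inside $C_u$, the triple $\{v,a,b\}\subseteq N(u)$ cannot be independent, and since $uv$ is the only edge between $C_u$ and $C_v$ we must have $ab\in E(G)$; hence $N(u)\cap C_u$ is a clique, and symmetrically so is $N(v)\cap C_v$. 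Combined with $N_G(C_u)\setminus C_u\subseteq S_e\cup T\cup\{v\}$ and the $(k+1)$-connectivity of $G$, which forces this external attachment to have size at least $k+1$, this pins $C_u$ down as a highly structured ``end'', inside which I expect to locate vertices of degree $k+1$.

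Finally I would aggregate. Running the above over a suitable family of edges, for instance the edges incident with a fixed large vertex, whose neighbourhood has independence number at most $2$ and therefore splits into few cliques, each large vertex is shown to be adjacent to a guaranteed number of distinct small vertices, while claw-freeness bounds how many large vertices a single small vertex can simultaneously serve. A discharging rule that moves the surplus charge from each large vertex onto the small vertices certifying its bridges, with the sharing controlled by the clique structure, should then combine to the density $\frac{k-1}{2k}$, the bookkeeping being arranged so that every unit of excess charge is compensated by incident small vertices in the correct proportion. The main obstacle I anticipate is this global bookkeeping rather than the local lemma: different edges produce different sets $S_e$, different barriers $T$ and different bridges, and one must prevent the same small vertex from being over-charged and guarantee that the ends $C_u$ do not overlap too wildly, so that the contributions add up to the sharp constant. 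Obtaining this overlap control, equivalently a clean fractional assignment from large vertices to small vertices that survives the interaction of all the barriers at once, is where the real work lies; the claw-free clique structure and the $(k+1)$-connectivity are precisely the tools I expect to make this assignment tight.
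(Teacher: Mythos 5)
Your local analysis is sound and essentially reproduces what the paper itself uses: the Favaron--Shi/Tutte structure attached to a removable edge $e=uv$ (the paper's Theorem \ref{minimal} and Lemma \ref{lem1}) and the clique structure of neighbourhoods forced by claw-freeness. But the argument has a genuine gap exactly where you admit ``the real work lies'': you never supply the global mechanism that converts this local structure into the density $\frac{k-1}{2k}$, and the per-vertex guarantee you propose as a substitute is not attainable. A vertex of degree $\geq k+2$ need \emph{not} be adjacent to any prescribed number of degree-$(k+1)$ vertices: in the correct accounting a high-degree vertex may be a hub joined to arbitrarily many other high-degree vertices, and only an \emph{average} statement over all such vertices is true. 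Consequently a discharging rule that asks each large vertex individually to find its own small neighbours cannot be closed. Moreover, your normalization via the total charge $2|E(G)|-(k+1)|V(G)|$, combined with the trivial lower bound of one unit of charge per large vertex, would only yield a density of order $\frac{1}{k+2}$, which falls short of $\frac{k-1}{2k}$ for every $k\geq 3$ (they coincide only at $k=2$). The right quantity to control is the number of edges between the two classes, not the total degree excess.

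The missing idea is the paper's Claim 1 in Section 4. Writing $V_1$ for the degree-$(k+1)$ vertices and $G_2=G[V(G)\setminus V_1]$, one classifies each edge of $G_2$ as type 1 ($|S_e|=k$) or type 2 ($|S_e|=k+1$), per Lemma \ref{lem1}; the type-2 edges form a matching (Proposition \ref{prop2}) and, crucially, the type-1 edges induce a \emph{forest} on $V(G_2)$. Acyclicity is what bounds the number of edges inside $G_2$ by roughly $\frac{3}{2}|V(G_2)|$, so that on average each large vertex sends at least $k-1$ edges to $V_1$, while each vertex of $V_1$ receives at most $k+1$; this gives $(k-1)|V(G_2)|+2\le (k+1)|V_1|$ and hence the stated bound. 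The forest claim is the technical core, and it is where the hypothesis of $(k+1)$-connectivity (which your outline invokes only in passing, to size the attachment of $C_u$) is actually consumed: assuming a cycle of type-1 edges, one chooses, over all Property-$Q$ cuts attached to its edges, one whose odd component is smallest, then plays two overlapping cuts against each other to manufacture a strictly smaller odd component, a contradiction; $(k+1)$-connectivity is needed at several steps of this exchange (for instance to force one of the four intersection regions to be empty). Nothing in your proposal anticipates this acyclicity statement or offers an alternative overlap-control device, so as written the argument does not reach the conclusion.
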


Specially, if $k=2$, we derive that every 3-connected minimal bicritical
claw-free graph $G$ has at least $\frac{1}{4}|V(G)|$ vertices of degree
three by Theorem \ref{main2}, which yields further evidence for Conjecture \ref{conj2}.

For a minimal $k$-connected graph $G$, there exist similar results.
W. Mader \cite{MD} showed that $\delta(G)=k$ and \cite {MW} further
obtained that $G$ has at least $\frac{(k-1)|V(G)|+2}{2k-1}$ vertices of degree $k$.

Some preliminaries are presented in Section 2.
The proofs of Theorems \ref{main1} and \ref{main2} will be given in Sections 3 and 4, respectively.

\section{Some preliminaries}

In this section, we give some graph-theoretical terminology and notation,
and some known results that will be used in the proofs of the main results.
For unspecific graph-theoretic terminology, we follow
J. A. Bondy and U. S. R. Murty \cite{BM}.

For $X, Y \subseteq V(G)$, by $E(X, Y)$ we mean the set of edges
of $G$ with one end vertex in $X$ and the other end vertex in $Y$.
Denote by $G[S]$ the subgraph of $G$ induced
by $S$ for a subset $S$ of $V(G)$, and $G-S=G[V(G)\backslash S]$.
If $F \subseteq E(G)$, then the {\em edge-induced subgraph}
$G[F]$ is the subgraph of $G$ whose edge set is $F$ and whose
vertex set consists of the end-vertices of all edges of $F$.
$G-uv$ stands for the graph obtained from $G$ by deleting $uv$ if $uv \in E(G)$.
Likewise, $G+uv$ stands for the graph obtained from $G$ by adding an
edge $uv$ if $uv \notin E(G)$.
%Let $G_{1}$ and $G_{2}$ be two graphs having no vertex in common.
%The {\em union} of $G_{1}$ and $G_{2}$ is the graph with
%vertex set $V(G_{1}) \cup V(G_{2})$ and edge set $E(G_{1}) \cup E(G_{2})$.
A {\em complete graph} is a graph
in which any two vertices are adjacent.

Let $G$ be a connected graph and let $k$ be a positive integer
with $k<|V(G)|$. A {\em $k$-vertex cut} of $G$ is
a set of vertices of $G$ with cardinality $k$ whose removal disconnects $G$,
and $G$ is {\em $k$-connected}
if $G$ contains no vertex cut with less than $k$ vertices.
Similarly, a graph $G$ is said to be {\em $k$-edge-connected}
if the deletion of less than $k$ edges from $G$ does not disconnect it.
We call a graph $G$ {\em trivial} if $|V(G)|=1$,
and {\em nontrivial} otherwise.
%By the definition of $k$-connected, the following property can be
%directly obtained. We will use it again and again.

%\begin{prop}\label{conn}
%Let $G$ be a $k$-connected graph, and let $X$ be a $k$-vertex cut
%and $C_1, C_2, \ldots, C_t$ be the components of $G-X$, $t \geq 2$.
%Then every vertex in $X$ has a neighbour in $C_i$, $i=1, 2, \ldots, t$.
%\end{prop}

W. T. Tutte \cite{TTW} established the following fundamental theorem.

\begin{thm}[Tutte's theorem]\label{Tutte}
A graph $G$ has a perfect matching if and only if
\begin{center}
$c_{o}(G-X) \leq |X|$
\end{center}
for every subset $X$ of $V(G)$,
where $c_{o}(G-X)$ denotes the number of odd components of $G-X$.
\end{thm}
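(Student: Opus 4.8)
The plan is to prove the two implications separately, with the forward (necessity) direction being routine and the reverse (sufficiency) direction carrying all of the difficulty. For necessity, suppose $G$ has a perfect matching $M$ and fix any $X \subseteq V(G)$. Each odd component $C$ of $G-X$ contains an odd number of vertices, so $M$ cannot match all of $V(C)$ internally; hence at least one vertex of $C$ is matched by $M$ to a vertex of $X$. Distinct odd components occupy distinct vertices of $X$, so $c_o(G-X) \leq |X|$. Taking $X = \emptyset$ in the hypothesis also shows that $G$ has no odd component, forcing $|V(G)|$ to be even.

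For sufficiency I would argue by contradiction, first reducing to an \emph{edge-maximal} counterexample: assume $G$ satisfies the Tutte condition but has no perfect matching, and assume in addition that $G+e$ has a perfect matching for every missing edge $e$. This reduction is legitimate because adding an edge can only merge components of $G-X$, hence cannot increase $c_o(G-X)$, so the Tutte condition is preserved under edge addition. Next let $U$ denote the set of vertices of $G$ adjacent to every other vertex. The crux of the proof is the claim that $G-U$ is a disjoint union of complete graphs.

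Granting the claim, I would finish as follows. Applying the Tutte condition with $X = U$ yields $c_o(G-U) \leq |U|$. Since each component of $G-U$ is a clique, I can select one vertex from each odd component and match it to a distinct vertex of $U$ (possible precisely because $c_o(G-U) \leq |U|$, and legal because every vertex of $U$ is adjacent to all others), match the remaining vertices inside each now-even clique, and finally match the leftover vertices of $U$ among themselves. The last step is possible because $U$ induces a complete graph and a parity count, using that $|V(G)|$ is even, shows that an even number of $U$-vertices remain. This produces a perfect matching of $G$, the desired contradiction.

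The main obstacle, and the step I expect to be hardest, is establishing the claim. Here I would assume the contrary, obtaining vertices $x, y, z$ in a common component of $G-U$ with $xy, yz \in E(G)$ but $xz \notin E(G)$; since $y \notin U$, there is also a vertex $w$ with $yw \notin E(G)$. By edge-maximality, both $G+xz$ and $G+yw$ have perfect matchings, say $M_1 \ni xz$ and $M_2 \ni yw$. I would then analyze the symmetric difference $M_1 \triangle M_2$, which is a disjoint union of alternating cycles. If $xz$ and $yw$ lie on different cycles, flipping $M_2$ along the cycle through $yw$ produces a perfect matching of $G$ avoiding both fictitious edges. If they lie on a common cycle $C$, I would exploit the edge $yz \in E(G)$: traversing $C$ from $w$ in the direction that avoids $yw$ until first reaching $\{x, z\}$ yields an alternating path which, closed off through $y$ using the genuine edge $yz$, again gives a perfect matching of $G$ that uses no fictitious edge. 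Either outcome contradicts the assumption that $G$ has no perfect matching, which completes the argument.
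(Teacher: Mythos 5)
Your proposal cannot be compared with ``the paper's own proof'' for a simple reason: the paper does not prove Tutte's theorem at all. It is stated as Theorem \ref{Tutte} with a citation to Tutte's 1947 paper and is used purely as a black box (for instance, behind the Favaron--Shi characterization in Theorem \ref{minimal}). What you have written is the classical extremal proof usually attributed to Lov\'{a}sz, found in standard texts such as Bondy--Murty and Diestel, and it is correct: necessity follows from the parity argument you give; for sufficiency, the Tutte condition is preserved under edge addition (merging components never increases $c_{o}(G-X)$), so by finiteness an edge-maximal counterexample exists (note $|V(G)|$ is even, and complete graphs of even order have perfect matchings, so the edge-addition process must stop); the assembly of a perfect matching from the cliques of $G-U$ and the universal set $U$ works, with the parity count justifying the final matching inside $U$; and the two-case analysis of $M_{1}\triangle M_{2}$ is the right argument for the key claim. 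Two small points deserve an explicit line in a full write-up. First, the existence of $x,y,z$ with $xy,yz\in E(G)$ and $xz\notin E(G)$ rests on the observation that a connected non-complete graph contains two vertices at distance exactly two. Second, in the case where $xz$ and $yw$ lie on a common alternating cycle $C$, the traversal from $w$ away from the edge $yw$ may first reach $x$ rather than $z$, so the closing edge is $xy$ rather than $yz$; this is harmless since both are genuine edges, but the argument should be phrased in terms of ``the first vertex of $\{x,z\}$ reached,'' and one should note that this path cannot pass through $y$ beforehand, since the traversal away from $yw$ returns to $y$ only at its very end. Neither point is a gap, only a matter of polish; the proof is sound.
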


The following characterization of minimal $k$-factor-critical graphs
of Tutte's type due to O. Favaron and M. Shi \cite{FS} will be useful
in the proof of Theorem \ref{main1}.

\begin{thm}[\cite{FS}]\label{minimal}
Let $G$ be a $k$-factor-critical graph. Then $G$ is minimal
if and only if for each edge $e=uv \in E(G)$,
there exists $S_{e} \subseteq V(G)\backslash \{u,v\}$
such that $|S_{e}| \geq k$, $c_{o}(G_{e})=|S_{e}|-k+2$ and $u$ and $v$
belong to two distinct odd components of $G_{e}$,
where $G_{e}=G-e-S_{e}$.
\end{thm}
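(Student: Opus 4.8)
The plan is to translate both $k$-factor-criticality and its failure into Tutte-type deficiency conditions, and then to exploit a parity constraint to pin down the \emph{exact} count $|S_e|-k+2$. First I would record the standard reformulation: since $k$-factor-criticality forces $\lvert V(G)\rvert\equiv k\pmod 2$, a graph $H$ on the vertex set $V(G)$ is $k$-factor-critical if and only if $c_o(H-X)\le\lvert X\rvert-k$ for every $X\subseteq V(H)$ with $\lvert X\rvert\ge k$. This follows from Theorem~\ref{Tutte} applied to $H-K$ over all $k$-subsets $K$, writing a Tutte deficiency set of $H-K$ as $X\setminus K$. In particular $G$ itself satisfies $c_o(G-X)\le\lvert X\rvert-k$ for all such $X$.

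For the forward implication, fix an edge $e=uv$. Minimality gives that $G-e$ is not $k$-factor-critical, so by the reformulation there is $X$ with $\lvert X\rvert\ge k$ and $c_o((G-e)-X)\ge\lvert X\rvert-k+1$. I would first argue $u,v\notin X$: otherwise $(G-e)-X=G-X$, contradicting $c_o(G-X)\le\lvert X\rvert-k$. Hence $(G-e)-X=(G-X)-e$, and deleting the single edge $e$ can raise the number of odd components by at most $2$, so $c_o((G-e)-X)\le c_o(G-X)+2\le\lvert X\rvert-k+2$. The crucial step is the parity observation $c_o(H-X)\equiv\lvert V(G)\rvert-\lvert X\rvert\equiv\lvert X\rvert-k\pmod 2$ for $H\in\{G,\,G-e\}$; combined with $c_o((G-e)-X)\ge\lvert X\rvert-k+1$ this forces $c_o((G-e)-X)=\lvert X\rvert-k+2$. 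Then the increase is exactly $2$, which can only occur when the component of $G-X$ containing $u$ and $v$ splits, upon deleting $e$, into two odd components, one containing $u$ and the other $v$. Setting $S_e=X$ then yields all the required properties.

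For the converse, suppose each edge $e=uv$ admits such an $S_e$. Choose any $k$-subset $K\subseteq S_e$ and put $Y=S_e\setminus K$, so that $\lvert Y\rvert=\lvert S_e\rvert-k$ and $((G-e)-K)-Y=G_e$. Then $c_o(((G-e)-K)-Y)=\lvert S_e\rvert-k+2=\lvert Y\rvert+2>\lvert Y\rvert$, so by Theorem~\ref{Tutte} the graph $(G-e)-K$ has no perfect matching. Thus $G-e$ is not $k$-factor-critical for every edge $e$, i.e.\ $G$ is minimal.

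The main obstacle is the forward direction, specifically combining the two opposing bounds on $c_o((G-e)-X)$---the upper bound from ``deleting one edge raises the odd-component count by at most $2$'' and the lower bound from failure of $k$-factor-criticality---with the parity congruence to collapse them to the single exact value $\lvert X\rvert-k+2$. Verifying that this ``$+2$'' increase is realized precisely by the splitting of one even component into two odd halves (rather than by any other reconfiguration) is where I would be most careful, since it is exactly this split that places $u$ and $v$ in distinct odd components of $G_e$.
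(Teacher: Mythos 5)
Your proof is correct. Note that there is no in-paper proof to compare it against: Theorem~\ref{minimal} is quoted from Favaron and Shi \cite{FS} without proof. Your argument is the natural one and every step checks out: the reformulation of $k$-factor-criticality as ``$c_o(H-X)\le |X|-k$ for all $X$ with $|X|\ge k$'' follows from Theorem~\ref{Tutte} exactly as you say; the observation $u,v\notin X$ is what makes $(G-e)-X=(G-X)-e$ legitimate; and the squeeze between the lower bound $|X|-k+1$ (failure of $k$-factor-criticality of $G-e$), the upper bound $c_o(G-X)+2\le |X|-k+2$, and the parity congruence $c_o((G-e)-X)\equiv |X|-k \pmod 2$ forces the exact value $|X|-k+2$, which in turn forces $c_o(G-X)=|X|-k$ and an increase of exactly $2$ upon deleting $e$; since a single edge deletion raises the odd-component count by $2$ only when an even component splits into two odd halves at a cut edge, $u$ and $v$ land in distinct odd components of $G_e$. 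The converse via choosing a $k$-subset $K\subseteq S_e$ and applying Theorem~\ref{Tutte} to $(G-e)-K$ with the deficiency set $S_e\setminus K$ is likewise sound.
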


O. Favaron and M. Shi \cite{FS} found that from the ear decomposition of
a factor-critical graph, the last ear is a path or cycle of length at least 3
and then every minimal factor-critical graph has minimum degree two.
We state it as a lemma for later use.

\begin{lem}[\cite{FS}]\label{lem}
Let $G$ be a minimal factor-critical graph. Then $\delta(G)=2$.
\end{lem}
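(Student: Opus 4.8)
The plan is to combine Lov\'{a}sz's odd ear decomposition of factor-critical graphs with the minimality hypothesis, proving $\delta(G)=2$ by establishing the two inequalities $\delta(G)\geq 2$ and $\delta(G)\leq 2$ separately. First I would record the easy lower bound: any nontrivial factor-critical graph $G$ satisfies $\delta(G)\geq 2$. Indeed, if some vertex $v$ had $d_{G}(v)\leq 1$, then deleting its unique neighbor (or, when $d_{G}(v)=0$, any other vertex) would leave $v$ isolated, so $G$ minus that vertex could admit no perfect matching, contradicting factor-criticality. Equivalently, this follows from Theorem~\ref{conn} with $k=1$, since a $2$-edge-connected graph has minimum degree at least $2$.

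For the matching upper bound $\delta(G)\leq 2$, I would invoke the fact that a graph is factor-critical if and only if it admits an odd ear decomposition $\{v_{0}\}=G_{0}\subset G_{1}\subset\cdots\subset G_{r}=G$, in which each $G_{i}$ is obtained from $G_{i-1}$ by adjoining an ear $P_{i}$ of odd length whose internal vertices are new, and where crucially every intermediate graph $G_{i}$ is itself factor-critical. Fixing such a decomposition of the minimal graph $G$, I would focus on the last ear $P_{r}$ and analyze its length.

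The key step is to show that $P_{r}$ has length at least $3$. Suppose instead that $P_{r}$ is a single edge $e$; then $P_{r}$ has no internal vertices, so $G-e=G_{r-1}$, which is factor-critical. This contradicts the minimality of $G$, as it exhibits an edge whose deletion preserves factor-criticality. Hence $P_{r}$ has odd length at least $3$ (and when $r=1$ the single ear is an odd cycle of length at least $3$, giving the same conclusion). Consequently $P_{r}$ contains at least two internal vertices, and each such vertex is incident only with the two ear-edges meeting it, so it has degree exactly $2$ in $G$. This yields $\delta(G)\leq 2$, and combining with the lower bound completes the proof that $\delta(G)=2$.

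I expect the main obstacle to be the careful handling of the ear decomposition rather than the final degree count. One must justify that every initial segment $G_{i}$ of an odd ear decomposition is factor-critical, so that $G_{r-1}=G-e$ really is factor-critical in the length-one case and the contradiction with minimality is legitimate; one must also treat the degenerate situation $r=1$, where $G$ is a single odd cycle and every vertex already has degree $2$. Once these structural facts are secured, the conclusion that the internal vertices of the last ear have degree $2$ is immediate.
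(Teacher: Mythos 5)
Your proposal is correct and follows essentially the same route as the paper, which cites Favaron and Shi's argument that in an (odd) ear decomposition of a minimal factor-critical graph the last ear must be a path or cycle of length at least $3$ (a length-one ear would be an edge $e$ with $G-e$ still factor-critical, contradicting minimality), so its internal vertices have degree $2$. Your additional verification of the lower bound $\delta(G)\geq 2$ and of the degenerate case $r=1$ is consistent with, and slightly more explicit than, the paper's summary of that argument.
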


\section{Proof of Theorem \ref{main1}}

In this section, we prove that Conjecture \ref{conj1} holds for all minimal
$k$-factor-critical claw-free graphs.
We first prove some useful lemmas by Theorem \ref{minimal}.

\begin{lem}\label{lem0}
Let $G$ be a minimal $k$-factor-critical claw-free graph.
For every edge $e$ of $G$, let $S_e$ and $G_e$ be defined in Theorem $\ref{minimal}$.
Then for any subset $X$ of $S_e$ with $|X|=k$, every perfect matching
of $G-X$ contains $e$. Moreover, if $|S_e|\geq k+1$, then every vertex in $S_e$
has a neighbor in some odd component of $G_e$
other than the odd components containing $u$ and $v$.
\end{lem}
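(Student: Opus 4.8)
The plan is to derive both assertions from Tutte's theorem (Theorem \ref{Tutte}) together with a counting argument on the odd components of $G_e$; notably, the claw-free hypothesis is not needed for this lemma. Write $e = uv$, set $s = |S_e|$, and let $C_u, C_v$ be the odd components of $G_e$ containing $u$ and $v$. By Theorem \ref{minimal} we have $c_o(G_e) = s - k + 2$, so apart from $C_u$ and $C_v$ there are exactly $t := s - k$ further odd components, say $D_1, \dots, D_t$.

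For the first assertion, fix $X \subseteq S_e$ with $|X| = k$. Since $G$ is $k$-factor-critical, $G - X$ has a perfect matching; let $M$ be any such matching. I would argue by contradiction: if $e \notin M$, then (as $u, v \notin X$ because $X \subseteq S_e \subseteq V(G)\setminus\{u,v\}$) the matching $M$ uses no copy of $e$ and is therefore a perfect matching of $G - e - X$. Taking $Y = S_e \setminus X$ as a Tutte witness in $G - e - X$ gives $(G - e - X) - Y = G - e - S_e = G_e$, so $c_o\big((G-e-X) - Y\big) = s - k + 2 > s - k = |Y|$; by Theorem \ref{Tutte} this shows $G - e - X$ has no perfect matching, a contradiction. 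Hence $e \in M$.

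For the second assertion, assume $s \geq k + 1$, so $t \geq 1$, and suppose for contradiction that some $w \in S_e$ has no neighbor in any $D_i$. Since $|S_e \setminus \{w\}| = s - 1 \geq k$, I can choose $X \subseteq S_e \setminus \{w\}$ with $|X| = k$; then $w \in S_e \setminus X$ and $|S_e \setminus X| = t$. Let $M$ be a perfect matching of $G - X$. The key structural observation is that each $D_i$ is a component of $G - e - S_e$ containing neither $u$ nor $v$, so in $G$ every edge leaving $D_i$ has its other end in $S_e$; thus in $G - X$ the only vertices outside $D_i$ to which $M$ can match a vertex of $D_i$ lie in $S_e \setminus X$. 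Because $|D_i|$ is odd, at least one vertex of $D_i$ is matched by $M$ into $S_e \setminus X$, and these matching edges use pairwise distinct vertices of $S_e \setminus X$.

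The crux is then a tightness count: the $t$ components $D_1, \dots, D_t$ together consume at least $t$ distinct vertices of $S_e \setminus X$, yet $|S_e \setminus X| = t$, so every vertex of $S_e \setminus X$ — in particular $w$ — is matched by $M$ to some $D_i$, producing a neighbor of $w$ in that $D_i$ and contradicting the assumption. The step requiring the most care is verifying that no $M$-edge incident to a $D_i$ can escape to $C_u$, $C_v$, an even component of $G_e$, or stay within $S_e \setminus X$; this is precisely what forces the count to be tight, and it rests on the facts that $S_e$ separates the $D_i$ from the remainder of $G - e$ and that the equality $c_o(G_e) = |S_e| - k + 2$ leaves no slack.
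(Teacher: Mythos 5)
Your proof is correct. For the ``moreover'' assertion it coincides with the paper's argument: the paper, too, observes that the $t=|S_e|-k$ odd components of $G_e$ other than those containing $u$ and $v$ can only reach $S_e\setminus X$ in $G-X$, so by parity each must absorb a distinct vertex of $S_e\setminus X$; since $|S_e\setminus X|=t$ the count is tight, and every vertex of $S_e\setminus X$ (in particular $w$) is matched into one of them. The genuine difference is in the first assertion: you obtain $e\in M$ by applying Theorem \ref{Tutte} to $G-e-X$ with the witness $Y=S_e\setminus X$, since $c_o\bigl((G-e-X)-Y\bigr)=|S_e|-k+2>|Y|$, which treats $|S_e|=k$ and $|S_e|\geq k+1$ uniformly. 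The paper instead splits into cases: when $|S_e|=k$ it notes that $e$ is the only edge joining the two odd components of $G_e$, so parity forces $e\in M$; when $|S_e|\geq k+1$ it extracts $e\in M$ as a by-product of the same tight count (the vertices of $S_e\setminus X$ being exhausted by the other odd components, $C_u$ and $C_v$ can only be saturated via $e$). Your route is slightly more modular, making the parity obstruction explicit through Tutte's theorem and decoupling ``$e\in M$'' from the structural claim about $S_e$; the paper's is more economical, delivering both conclusions from a single count. Your side remark that claw-freeness plays no role in this lemma is also accurate --- the paper's proof never uses it either.
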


\begin{proof}
By Theorem \ref{minimal}, $c_{o}(G_{e})=|S_{e}|-k+2$.
Since $G$ is $k$-factor-critical, $G-X$ has a perfect matching $M$.
If $|S_{e}|=k$, then $c_{o}(G_{e})=2$
and $e$ joins the two odd components. So $e \in M$.
If $|S_{e}| \geq k+1$, then $c_{o}(G_{e}) \geq 3$.
For any vertex $w$ in $S_e$, let $X\subseteq S_e\backslash \{w\}$.
Since $|S_e\backslash X|=c_{o}(G_{e})-2$, $e \in M$ and
$w$ is matched to a vertex in some odd component of $G_e$
other than the odd components containing $u$ and $v$ by $M$.
\end{proof}

\begin{lem}\label{lem1}
Let $G$ be a minimal $k$-factor-critical claw-free graph with $k \geq 2$.
For any edge $e=uv \in E(G)$ with $d_{G}(u) \geq k+2$ and $d_{G}(v) \geq k+2$,
let $S_e$ be a smallest set satisfying the conclusion of Theorem $\ref{minimal}$
when applied to the edge $e$ and $G_{e}=G-e-S_{e}$.
Let $O_1, \ldots, O_t$ be the odd components of $G_e$ such that $u \in V(O_1)$ and $v \in V(O_2)$.
Then $G_{e}$ has no even component and one of the following statements holds:

{\rm (\romannumeral1)} $|S_e|=k$, $t=2$ and $|V(O_i)|>1$ for $i=1,2$;

{\rm (\romannumeral2)} $|S_e|=k+1$, $t=3$ and $|V(O_i)|=1$ for $i=1,2$; moreover,
for every vertex $w$ of $S_e$, $\{uw,vw\}\subset E(G_{e})$ and $N(w)\cap V(O_3)\neq \emptyset$.
\end{lem}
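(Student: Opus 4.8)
The plan is to play three independent constraints on the separation $(S_e,G_e)$ against the degree hypotheses $d_G(u),d_G(v)\ge k+2$: the $k$-connectivity of $G$, claw-freeness, and the minimality of $S_e$. First I would record the elementary adjacency structure. Since $O_1,O_2$ are distinct components of $G_e=G-e-S_e$, the edge $e=uv$ is the only edge of $G$ joining $V(O_1)$ to $V(O_2)$, and every component $C\notin\{O_1,O_2\}$ sends all its outgoing edges into $S_e$; in particular $N(u)\subseteq V(O_1)\cup S_e\cup\{v\}$, and symmetrically for $v$. By Theorem \ref{minimal} we have $c_o(G_e)=|S_e|-k+2$, so controlling $|S_e|$ is the same as controlling the number $t$ of odd components. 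Two inequalities then frame everything. Because $G$ is $k$-connected (Theorem \ref{conn}), separating a single component from the rest gives $|N(C)\cap S_e|\ge k$ when $C\notin\{O_1,O_2\}$ and $\ge k-1$ for $O_1,O_2$ (the $+1$ accounting for $e$). Dually, claw-freeness limits how a vertex $w\in S_e$ meets the components: three neighbours of $w$ in three distinct components are pairwise non-adjacent in $G$ \emph{unless} two of them are exactly $u$ and $v$, the sole cross-component edge. Hence $w$ meets at most two components, with the single exception that $w$ may meet $O_1,O_2$ and one further component provided $w\sim u$ and $w\sim v$.

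The decisive tool is a merging argument driven by the minimality of $S_e$, valid whenever $|S_e|\ge k+1$. If $w\in S_e$ meets exactly two components, then deleting $w$ from $S_e$ reunites them through $w$; a short parity check shows the resulting set $S_e'=S_e\setminus\{w\}$ again satisfies the conclusion of Theorem \ref{minimal}, with $|S_e'|=|S_e|-1$, unless the merge destroys the requirement that $u,v$ lie in two distinct \emph{odd} components (which happens precisely when the two merged components are $O_1,O_2$, or when $u$ or $v$ is swallowed into a newly even component). Thus minimality forbids every two-component vertex other than the harmless type $\{O_1,O_2\}$, and forbids vertices attached to a single odd component $O_i$ with $i\ge3$. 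Intersecting this list of survivors with the second assertion of Lemma \ref{lem0} — every $w\in S_e$ has a neighbour in some odd component other than $O_1,O_2$ — is the crux: the only surviving configuration containing such an $O_i$ is the exceptional one. Hence \emph{every} $w\in S_e$ satisfies $w\sim u$, $w\sim v$, and meets exactly one $O_i$ with $i\ge3$ and no even component. This simultaneously gives the moreover-part of (ii) and shows $G_e$ has no even component in this regime.

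From here the rest of (ii) falls out. A claw at such a $w$ forces its only neighbour in $O_1$ to be $u$ and in $O_2$ to be $v$; consequently every vertex of $V(O_1)\setminus\{u\}$ has all its neighbours inside $O_1$, so $u$ would be a cut vertex unless $V(O_1)=\{u\}$, and $2$-connectivity yields $|V(O_1)|=|V(O_2)|=1$. Finally, since each $w$ meets exactly one $O_i$ $(i\ge3)$, the sets $N(O_i)\cap S_e$ partition $S_e$; comparing $\sum_{i\ge3}|N(O_i)\cap S_e|\ge k(t-2)$ with $|S_e|=t-2+k$ gives $(t-2)(k-1)\le k$, whence $t=3$ and $|S_e|=k+1$. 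The complementary regime $|S_e|=k$ is easy and yields case (i): here $c_o(G_e)=2$, so $t=2$, and if $|V(O_1)|=1$ then $d_G(u)\le|S_e|+1=k+1$, contradicting $d_G(u)\ge k+2$, so $|V(O_i)|>1$ for $i=1,2$.

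I expect the genuine obstacles to be twofold. First, making the parity bookkeeping of the merging step airtight across every way the deleted vertex can touch odd versus even components is what simultaneously excludes even components and caps $|S_e|$ at $k+1$, so this bookkeeping must be complete rather than illustrative. Second, the boundary value $k=2$ is delicate: the partition estimate only gives $(t-2)\le 2$, i.e. $t\le4$, and excluding $t=4$ requires the sharper $(k+1)$-edge-connectivity of Theorem \ref{conn} (bounding the edges, not merely the vertices, between each $O_i$ and $S_e$) or a direct perfect-matching argument. A separate short argument via Tutte's theorem applied to $G-S_e$ will also be needed to rule out even components in the $|S_e|=k$ case, where the merging argument cannot decrease $|S_e|$ below $k$.
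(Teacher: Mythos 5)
Your overall architecture (claw-freeness limits each $w\in S_e$ to at most two components, except the configuration where $w$ meets $O_1,O_2$ exactly at $u,v$; minimality of $S_e$ kills the non-exceptional configurations via merging; a partition count then pins down $t$; finally $O_1,O_2$ are forced to be trivial) is essentially the paper's proof of the $|S_e|\ge k+1$ case, and that middle portion of your argument is sound. But both of the obstacles you flag at the end are genuine gaps, and they trace to one missing ingredient that the paper isolates as its Claim 1: \emph{every} odd component $O_i$ of $G_e$ is joined to at least $k+1$ vertices of $V(G)\setminus V(O_i)$. This is not $k$-connectivity but $k$-factor-criticality: if $O_i$ were joined to exactly $k$ outside vertices, deleting those $k$ vertices would isolate the odd component $O_i$, so the resulting graph would have no perfect matching. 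With this bound your partition count reads $(k+1)(t-2)\le |S_e|=t-2+k$, i.e. $k(t-2)\le k$, hence $t=3$ for \emph{all} $k\ge 2$. With only the $k$-connectivity bound you correctly observe that $k=2$ leaves $t=4$ open, and your first proposed repair does not work: $(k+1)$-edge-connectivity bounds edges, not vertices, and several of the $k+1$ edges leaving $O_i$ may land on the same vertex of $S_e$, so it does not improve $|N(V(O_i))\cap S_e|\ge k$. The ``direct perfect-matching argument'' you mention in passing is exactly the needed step, but you never carry it out, and it is the crux of the lemma precisely at $k=2$, the case relevant to bricks.

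The second gap is the even components in the regime $|S_e|=k$. You defer this to ``Tutte's theorem applied to $G-S_e$'', but no matching-theoretic argument can work here: an even component of $G_e$ is entirely compatible with $G-S_e$ having a perfect matching, and since $|S_e|=k$ is already the minimum permitted by Theorem \ref{minimal}, minimality of $S_e$ gives nothing either. The paper's argument is a claw construction that again leans on Claim 1: both $O_1,O_2$ are nontrivial (your degree argument), and some $x\in S_e$ has a neighbour $x_1\in V(O_1)\setminus\{u\}$, since otherwise $\{u\}$ would be a cut vertex, impossible for $k\ge 2$. Claim 1 forces this \emph{same} $x$ to have a neighbour $x_2\in V(O_2)$ (mere $k$-connectivity only guarantees that $k-1$ vertices of $S_e$ meet $O_2$, which could miss $x$), while an even component $C$ would be joined to all $k$ vertices of $S_e$, giving $x_3\in N(x)\cap V(C)$; then $G[\{x,x_1,x_2,x_3\}]$ is a claw. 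So both holes are filled by the single factor-criticality claim you are missing; without it, your proof is incomplete exactly where the lemma is hardest.
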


\begin{proof}
By Theorem \ref{minimal}, $t=|S_e|-k+2$.
We first prove the following claim.

\smallskip
\smallskip
\noindent \textbf{Claim 1.}
Each $O_i$ is connected to at least $k+1$ vertices
in $V(G) \backslash V(O_i)$, $i=1, 2, \ldots, t$.
\smallskip
\smallskip

Since $G$ is $k$-factor-critical, $G$ is $k$-connected by Theorem \ref{conn}.
Then each $O_i$ is connected to at least $k$ vertices outside $O_i$.
Note that each $O_i$ is an odd component of $G_e$.
If there exists an $O_i$ such that it is connected to exactly
$k$ vertices outside it, then the removal of these $k$ vertices
isolates the odd component $O_i$.
Consequently, the resulting graph has no perfect matching,
contradicting that $G$ is $k$-factor-critical.
So Claim 1 holds.

\bigskip
By Theorem \ref{minimal}, we know that $|S_e|\geq k$. We will prove that
if $|S_e|=k$, then {\rm (\romannumeral1)} holds and if $|S_e|\geq k+1$,
then $|S_e|=k+1$ and {\rm (\romannumeral2)} holds.

\begin{case}\label{case1}
$|S_{e}|=k$.
\end{case}

Under this conndition, $G_{e}$ has exactly two odd components $O_1$ and $O_2$.
By Claim 1, every $O_{i}$ is connected to all vertices of $S_{e}$.
So every vertex in $S_{e}$ has neighbors in both $O_1$ and $O_2$.
If $O_1$ (resp. $O_2$) is trivial, then $N(u)=S_e \cup \{v\}$
(resp. $N(v)=S_e \cup \{u\}$). Thus $d_G(u)=k+1$ (resp. $d_G(v)=k+1$).
This is a contradiction to the assumption that $d_{G}(u) \geq k+2$
(resp. $d_{G}(v) \geq k+2$).
Hence both $O_1$ and $O_2$ are nontrivial.
By symmetry, we only consider $O_1$. We claim that there exists a vertex $x$
in $S_e$ such that $N(x) \cap (V(O_1) \backslash \{u\})\neq \emptyset$.
Otherwise, $\{u\}$ is a vertex cut of $G$,
contradicting that $G$ is $k$-connected ($k \geq 2$).
Let $x_1\in N(x) \cap (V(O_1) \backslash \{u\})$ and $x_2\in N(x) \cap V(O_2)$.

Finally, we prove that $G_e$ has no even component.
Suppose to the contrary that $G_e$ has an even component $C$.
Since $G$ is $k$-connected and $C$ can only be connected to $S_e$,
$C$ is connected to all vertices of $S_e$.
Then $N(x) \cap V(C)\neq \emptyset$. Let $x_3\in N(x) \cap V(C)$.
Thus $G[\{x, x_1, x_2, x_3\}]$ is a claw, a contradiction.
So {\rm (\romannumeral1)} holds.

\begin{case}\label{case2}
$|S_{e}| \geq k+1$.
\end{case}

We first show that
for every vertex $w$ of $S_e$, exactly three
odd components of $G_e$ contain vertices in $N(w)$ and $\{uw,vw\}\subset E(G_{e})$.
\hfill $(\dag)$

Let $X \subseteq S_e$ and $|X|=k$. Take any vertex $w$ in $S_e\backslash X$.
By Lemma \ref{lem0}, every perfect matching of $G-X$ contains $e$ and
$N(w) \cap V(O_i)\neq \emptyset$ for some $i \geq 3$.

Suppose $w$ has neighbors in at most two odd components of $G_e$.
Then $w$ has neighbors in at most one of $O_1$ and $O_2$.
Let $S_e'=S_e \backslash \{w\}$. Adding $w$ back to the graph $G_e$ either
turns an odd component into an even component (if $w$ has neighbors in
exactly one odd component) or merges two odd components into a single
odd component (if $w$ has neighbors in two odd components).
Thus $c_o(G-e-S_e')=c_o(G-e-S_e)-1=|S_e|-k+2-1=|S_e'|-k+2$
and $u$ and $v$ are in distinct odd components of $G-e-S_e'$.
Then for the edge $e$, $S_e'$ and $G_e'=G-e-S_e'$
satisfy the conclusion of Theorem \ref{minimal}.
But $|S_e'|=|S_e|-1$, a contradiction to the minimality of $S_e$.

Suppose $w$ has neighbors in at least four odd components of $G_e$.
Since the only edge between different odd components of $G_e$ is $e$,
$w$ has neighbors in at least three components of $G_e+e$.
Let three of them be $\{w_1, w_2, w_3\}$.
Then $G[\{w, w_1, w_2, w_3\}]$ is a claw, a contradiction.
Finally, if $w$ has neighbors in exactly three
odd components but $uw\notin E(G)$ or $vw\notin E(G)$, then $G$ also has a claw,
a contradiction. By the above arguments, $w$ satisfies $(\dag)$.

Now we prove that $G_e$ has no even component.
Suppose to the contrary that $G_e$ has an even component $C$.
Since $G$ is $k$-connected and $C$ can only be connected to $S_e$,
$C$ is connected to every vertex in $S_e$.
Then there exists a vertex $x$ in $S_e$ such that $N(x) \cap V(C)\neq  \emptyset$.
Note that $x$ satisfies $(\dag)$.
Thus $x$ has neighbors in four components of $G_e$ and
the only edge between different components of $G_e$ is $e$.
So $G$ has a claw, a contradiction.

In the sequel, we will show that $G_e$ has exactly three odd components.

We construct a graph $G'$ by contracting every $O_i$ into a
single vertex and deleting all the multiple edges appeared in this process.
Let $U_{e}$ denote the set of the vertices
resulting from the contraction of all $O_i's$.
Then $|U_{e}|=t=|S_{e}|-k+2$ and $V(G')=S_{e}\cup U_{e}$.
For convenience, we use $u'$ and $v'$ to represent
the vertices resulting from the contraction of
$O_1$ and $O_2$, respectively.
Then $u'v' \in E(G')$. By Claim 1,
each $O_i$ is connected to at least $k+1$ vertices in
$V(G) \backslash V(O_i)$, $i=1, 2, \ldots, t$.
It follows that every vertex in $U_{e} \backslash \{u', v'\}$ has
at least $k+1$ neighbors in $S_{e}$.
Thus there are at least $(k+1)(|U_{e}|-2)$
edges from $U_{e} \backslash \{u', v'\}$ to $S_{e}$ in $G'$.

Conversely, since every vertex in $S_{e}$ satisfies $(\dag)$,
it must have a neighbor in exactly one $O_{i}$ $(i \geq 3)$.
Then there are exactly $|S_{e}|$ edges
from $S_{e}$ to $U_{e} \backslash \{u', v'\}$ in $G'$.
So we have the following inequality
\begin{center}
$|S_{e}| \geq (k+1)(|U_{e}|-2)=(k+1)(|S_{e}|-k)$,
\end{center}
which implies that $|S_{e}| \leq k+1$.
By the assumption that $|S_{e}| \geq k+1$, we have $|S_{e}|=k+1$.
It follows that $t=c_o(G_{e})=3$.
Up to now, we have proved that $G_e$ has exactly three odd components
and no even component.
For every vertex $w$ in $S_{e}$, $\{uw, vw\} \subset E(G_e)$
and $N(w) \cap V(O_3)\neq \emptyset$.

We are left to show that both $O_1$ and $O_2$ are trivial.
Without loss of generality, we assume that $O_{1}$ is nontrivial.
Suppose that there exists a vertex $x$ in $S_e$
such that $N(x) \cap (V(O_{1}) \backslash \{u\})\neq \emptyset$.
Since $x$ satisfies $(\dag)$, $vx\in E(G)$ and $N(x) \cap V(O_3)\neq \emptyset$.
Let $x_1\in N(x) \cap V(O_{1}) \backslash \{u\}$ and $x_2\in N(x) \cap V(O_3)$.
Then $G[\{x, x_1, x_2, v\}]$ is a claw, a contradiction.
So none of the vertices in $S_{e}$ have a neighbor in $V(O_{1}) \backslash \{u\}$.
But then $\{u\}$ is a vertex cut of $G$,
contradicting that $G$ is $k$-connected $(k \geq 2)$.
Similarly, $O_{2}$ is trivial.
So {\rm (\romannumeral2)} holds.
\end{proof}

By Lemma \ref{lem1}, we see that
if $S_e$ is a smallest set satisfying the conclusion of Theorem \ref{minimal}
when applied to edge $e$,
then $S_e$ satisfies the conclusions of Lemma \ref{lem1}.
In subsequent discussions, we bear in mind that $S_e$ is always a certain smallest set
whenever applied Theorem \ref{minimal} to the edge $e$ and $G_{e}=G-e-S_{e}$.
Let $O_1, O_2$, $O_3$ denote the odd components of $G_e$
such that $u \in V(O_1)$ and $v \in V(O_2)$.

\begin{defi}\label{def1}
Let $G$ be a minimal $k$-factor-critical claw-free graph with $k\geq 2$.
For any edge $e=uv\in E(G)$ with $d_{G}(u) \geq k+2$ and $d_{G}(v) \geq k+2$,
let $S_e$ be a smallest set satisfying the conclusion of Theorem $\ref{minimal}$
when applied to the edge $e$. Let $G_{e}=G-e-S_{e}$.
We say $e$ is of type $1$ if $G_e$ satisfies Statement {\rm (\romannumeral1)}
of Lemma $\ref{lem1}$; otherwise, $e$ is of type $2$.
\end{defi}

\begin{figure}[h]
\centering
\includegraphics[height=4cm,width=8.6cm]{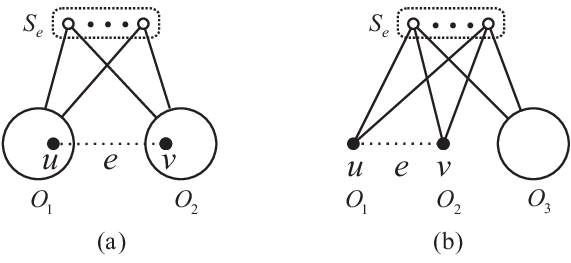}
\caption{\label{tu-02} (a) $e$ is of type 1 if $|S_e|=k$; (b) $e$ is of type 2 if $|S_e|=k+1$.}
\end{figure}
If $e$ is of type $1$,
then $G$ has the configuration as shown in Fig. \ref{tu-02}\ (a);
if $e$ is of type $2$, then
 $G$ has the configuration as shown in Fig. \ref{tu-02}\ (b)
by Lemma \ref{lem1}. Moreover, we have the following proposition.

\begin{prop}\label{prop1}
Let $G$ be a minimal $k$-factor-critical claw-free graph with $k\geq 2$.
For any edge $e=uv\in E(G)$ with $d_{G}(u) \geq k+2$ and $d_{G}(v) \geq k+2$,
$e$ is either of type $1$ or of type $2$, not both.
\end{prop}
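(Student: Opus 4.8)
The plan is to show that the \emph{type} of $e$ depends only on the common cardinality of all smallest sets $S_e$, and hence cannot depend on which particular smallest set is selected in Definition \ref{def1}. First I would record the basic observation that, since ``smallest'' refers to minimum cardinality, every set satisfying the conclusion of Theorem \ref{minimal} of least cardinality has the \emph{same} size; call this common value $s=\min|S_e|$. Theorem \ref{minimal} guarantees that at least one such set exists and that $s\geq k$, so $s$ is a well-defined integer and Lemma \ref{lem1} is applicable to each smallest set $S_e$.

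Next I would invoke Lemma \ref{lem1}: for any smallest set $S_e$, the graph $G_e=G-e-S_e$ has no even component and satisfies exactly one of the statements {\rm (\romannumeral1)} and {\rm (\romannumeral2)}. The crucial point, which is explicit in the formulations, is that these two statements are separated by the cardinality of $S_e$: statement {\rm (\romannumeral1)} forces $|S_e|=k$, whereas statement {\rm (\romannumeral2)} forces $|S_e|=k+1$. Because $s$ is fixed, it follows that $s\in\{k,k+1\}$ and that the alternative realized by $G_e$ is the same for every smallest set. Concretely, if $s=k$ then every smallest $S_e$ has $|S_e|=k$, so Lemma \ref{lem1} forces $G_e$ to satisfy {\rm (\romannumeral1)} (statement {\rm (\romannumeral2)} being impossible as it needs size $k+1$), and $e$ is of type $1$ regardless of the choice; symmetrically, if $s=k+1$ then every smallest $S_e$ yields {\rm (\romannumeral2)} and never {\rm (\romannumeral1)}, so $e$ is of type $2$ regardless of the choice. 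This establishes that the type of $e$ is well-defined.

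Finally, the ``not both'' assertion is immediate from the mutual exclusivity of the two cases: an edge with $s=k$ is of type $1$ and cannot be of type $2$, and an edge with $s=k+1$ is of type $2$ and cannot be of type $1$, simply because $s$ cannot equal both $k$ and $k+1$. I do not anticipate a substantive obstacle here, as the whole argument rests on the single remark that ``smallest'' pins down $|S_e|$, after which Lemma \ref{lem1} does all the work. The only point requiring care is to verify that statements {\rm (\romannumeral1)} and {\rm (\romannumeral2)} are genuinely distinguished by the value of $|S_e|$, which I would confirm directly from their statements in Lemma \ref{lem1}.
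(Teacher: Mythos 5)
Your proof is correct and takes essentially the same route as the paper: the paper states Proposition \ref{prop1} without an explicit proof, treating it as an immediate consequence of Lemma \ref{lem1} and Definition \ref{def1}. Your argument simply makes explicit the reasoning the paper leaves unstated, namely that every smallest set $S_e$ has the same cardinality $s\in\{k,k+1\}$, and since statements (\romannumeral1) and (\romannumeral2) of Lemma \ref{lem1} are distinguished by $|S_e|=k$ versus $|S_e|=k+1$, the value of $s$ determines the type independently of which smallest set is chosen.
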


Note that if $e$ is of type 2, then
$N(u)=S_{e} \cup \{v\}$ and $N(v)=S_{e} \cup \{u\}$.
It follows that $N(u) \backslash \{v\}=N(v)\backslash\{u\}=S_{e}$.
We state it as a proposition.

\begin{prop}\label{prop0}
Let $G$ be a minimal $k$-factor-critical claw-free graph with $k\geq 2$.
Let $uv$ be an edge of $G$. If $uv$ is of type $2$,
then $N(u) \backslash \{v\}=N(v) \backslash\{u\}$.
\end{prop}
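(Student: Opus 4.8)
The plan is to read off the equality directly from the structural description of a type-2 edge supplied by Lemma \ref{lem1}. Since $uv$ is of type $2$, Definition \ref{def1} tells us that $G_e = G - e - S_e$ satisfies statement (\romannumeral2) of Lemma \ref{lem1}: we have $|S_e| = k+1$, the graph $G_e$ has exactly three odd components $O_1, O_2, O_3$ and no even component, the components $O_1$ and $O_2$ are trivial (so $O_1 = \{u\}$ and $O_2 = \{v\}$), and every vertex $w \in S_e$ satisfies $\{uw, vw\} \subseteq E(G_e)$ together with $N(w) \cap V(O_3) \neq \emptyset$.

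First I would determine $N(u)$ exactly. Because $G_e$ is obtained from $G$ by deleting the single edge $e$ together with the vertex set $S_e$, any edge of $G$ joining two distinct components $O_i$ and $O_j$ is not incident with $S_e$ and hence must be the unique deleted non-$S_e$ edge $e=uv$; in other words, $e$ is the only edge of $G$ running between distinct odd components of $G_e$. As $O_1 = \{u\}$ is trivial, $u$ has no neighbor inside $O_1$, and its only neighbor in $O_2 \cup O_3$ is $v$ (via $e$). Therefore every neighbor of $u$ lies in $S_e \cup \{v\}$, giving $N(u) \subseteq S_e \cup \{v\}$.

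For the reverse inclusion I would invoke the last clause of statement (\romannumeral2): for each $w \in S_e$ we have $uw \in E(G_e) \subseteq E(G)$, so $S_e \subseteq N(u)$, while $uv = e \in E(G)$ gives $v \in N(u)$. Combining the two inclusions yields $N(u) = S_e \cup \{v\}$. The configuration is symmetric in $u$ and $v$ (statement (\romannumeral2) treats them identically and $O_2 = \{v\}$ is likewise trivial), so the same argument gives $N(v) = S_e \cup \{u\}$. Finally, since $S_e \subseteq V(G) \setminus \{u,v\}$ we have $u, v \notin S_e$, whence $N(u)\setminus\{v\} = S_e = N(v)\setminus\{u\}$, which is exactly the asserted equality.

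There is no substantive obstacle here, as the entire content is already packaged into Lemma \ref{lem1}. The only point requiring a moment's care is the claim that $u$ has no neighbor in $O_2 \cup O_3$ beyond $v$ (and symmetrically for $v$); this rests solely on the observation that $e$ is the unique edge of $G$ between distinct odd components of $G_e$, which in turn follows immediately from $G_e = G - e - S_e$ and $u,v \notin S_e$.
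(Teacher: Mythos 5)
Your proof is correct and follows essentially the same route as the paper, which derives the proposition directly from statement (\romannumeral2) of Lemma \ref{lem1} by noting that $N(u)=S_e\cup\{v\}$ and $N(v)=S_e\cup\{u\}$, hence $N(u)\backslash\{v\}=S_e=N(v)\backslash\{u\}$. You merely spell out both inclusions in more detail (including the observation that $e$ is the only edge of $G$ between distinct components of $G_e$), which the paper leaves implicit.
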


\begin{prop}\label{prop2}
Let $G$ be a minimal $k$-factor-critical claw-free graph with $k\geq 2$.
For any vertex $u$ of $G$, $u$ is incident with at most one edge of type $2$.
\end{prop}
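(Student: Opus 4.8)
The plan is to argue by contradiction: suppose some vertex $u$ is incident with two distinct edges $e_1=uv_1$ and $e_2=uv_2$ of type $2$, where $v_1\neq v_2$. Since both edges are of type $2$, Proposition \ref{prop0} gives $N(u)\setminus\{v_1\}=N(v_1)\setminus\{u\}=S_{e_1}$ and $N(u)\setminus\{v_2\}=N(v_2)\setminus\{u\}=S_{e_2}$, where $|S_{e_1}|=|S_{e_2}|=k+1$. In particular $N(u)=S_{e_1}\cup\{v_1\}=S_{e_2}\cup\{v_2\}$, so $d_G(u)=k+2$; and because $v_2\in N(u)$ with $v_2\neq v_1$, we have $v_2\in S_{e_1}$, and symmetrically $v_1\in S_{e_2}$.

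First I would pin down the third odd component $O_3$ of $G_{e_1}=G-e_1-S_{e_1}$ explicitly. Since $V(G_{e_1})=V(G)\setminus S_{e_1}$ and, by Lemma \ref{lem1}(\romannumeral2), the three odd components of $G_{e_1}$ are $\{u\}$, $\{v_1\}$ and $O_3$ with no even component, we obtain $V(O_3)=\bigl(V(G)\setminus S_{e_1}\bigr)\setminus\{u,v_1\}$. Substituting $S_{e_1}=N(u)\setminus\{v_1\}$ then shows that $V(O_3)$ is precisely the set of non-neighbours of $u$, namely $V(O_3)=V(G)\setminus\bigl(N(u)\cup\{u\}\bigr)$.

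The contradiction comes from playing the two type-$2$ structures against each other through the common vertex $u$. On one hand, the type-$2$ conclusion of Lemma \ref{lem1}(\romannumeral2) applied to $e_1$, together with $v_2\in S_{e_1}$, gives $N(v_2)\cap V(O_3)\neq\emptyset$, so $v_2$ has a neighbour outside $N(u)\cup\{u\}$. On the other hand, Proposition \ref{prop0} applied to $e_2$ yields $N(v_2)=\bigl(N(u)\setminus\{v_2\}\bigr)\cup\{u\}\subseteq N(u)\cup\{u\}$, so every neighbour of $v_2$ lies inside $N(u)\cup\{u\}$. These two statements are incompatible, which completes the proof.

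I expect the only delicate step to be the explicit identification of $V(O_3)$ as the non-neighbourhood of $u$: once that set is computed correctly from $S_{e_1}=N(u)\setminus\{v_1\}$, the clash between ``$v_2$ reaches $O_3$'' (forced by $e_1$) and ``$N(v_2)\subseteq N(u)\cup\{u\}$'' (forced by $e_2$) is immediate. Notably, no direct appeal to claw-freeness is needed here beyond what is already packaged into Lemma \ref{lem1} and Proposition \ref{prop0}.
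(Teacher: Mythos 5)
Your proposal is correct and takes essentially the same route as the paper: both arguments combine Lemma \ref{lem1}(\romannumeral2) (every vertex of $S_{e_1}$, in particular $v_2$, has a neighbour in $O_3$, while $N(u)\cap V(O_3)=\emptyset$) with Proposition \ref{prop0} to rule out a second edge of type $2$ at $u$. The paper merely phrases this directly---for any $u_1\in N(u)\setminus\{v_1\}$ the neighbourhood equality of Proposition \ref{prop0} fails, so $uu_1$ is not of type $2$---whereas you set it up as a contradiction with an explicit identification of $V(O_3)$ as the non-neighbourhood of $u$.
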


\begin{proof}
Let $e=uv$ and $e$ be of type 2. By Lemma \ref{lem1}, every vertex in $S_{e}$ has
neighbors in the three odd components of $G_{e}$.
Then for any $u_{1}\in N(u)\backslash\{v\}=S_{e}$, $N(u_{1}) \cap V(O_3)\neq \emptyset$.
Thus $N(u)\backslash\{u_{1}\} \neq N(u_{1})\backslash\{u\}$ as $N(u) \cap V(O_3)=\emptyset$.
By Proposition \ref{prop0}, $uu_{1}$ is not of type 2.
Noticing that $u_{1}$ is an arbitrary vertex in $N(u) \backslash \{v\}$,
the result follows.
\end{proof}

\begin{defi}\label{def2}
Let $e=uv \in E(G)$. We say a vertex cut $X_{e}^{u} \subseteq V(G)$
$($resp. $X_{e}^{v} \subseteq V(G)$$)$ satisfies Property $Q$ if

{\rm (\romannumeral1)} $|X_{e}^{u}|=k+1$ $($resp. $|X_{e}^{v}|=k+1$$)$;

{\rm (\romannumeral2)} $v\in X_{e}^{u}$ $($resp. $u\in X_{e}^{v}$$)$;

{\rm (\romannumeral3)} $u$ lies in an odd component, say $C_u$, of
$G-X_{e}^{u}$ $($resp. $v$ lies in an odd component, say $C_v$, of $G-X_{e}^{v}$$)$; and

{\rm (\romannumeral4)} $E(V(C_u), \{v\})=\{uv\}$
$($resp. $E(V(C_v), \{u\})=\{uv\}$$)$.
\end{defi}

Let $G$ be a minimal $k$-factor-critical claw-free graph with $k \geq 2$.
For any edge $e=uv\in E(G)$, if $e$ is of type 1, then $S_e\cup \{v\}$ is a
$(k+1)$-vertex cut and $u$ lies in the odd component $O_1$ of $G-(S_e\cup \{v\})$.
Moreover, $E(V(O_1), \{v\})=\{uv\}$. These observations lead to the following lemma.

\begin{lem}\label{lem00}
Let $G$ be a minimal $k$-factor-critical claw-free graph with $k\geq 2$.
For an edge $uv$ of $G$, if $uv$ is of type $1$,
then there exists a $(k+1)$-vertex cut $S_{uv}\cup \{v\}$
$($resp. $S_{uv}\cup \{u\}$$)$ satisfying Property $Q$.
\end{lem}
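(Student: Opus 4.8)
The plan is to take for $S_{uv}$ the smallest set $S_e$ produced by Theorem~\ref{minimal} applied to $e=uv$, and then to verify directly that $S_e\cup\{v\}$ (and, symmetrically, $S_e\cup\{u\}$) satisfies the four conditions of Property~$Q$. First I would record the structure forced by the hypothesis that $uv$ is of type~$1$: by Definition~\ref{def1} together with Statement~{\rm (\romannumeral1)} of Lemma~\ref{lem1}, we have $|S_e|=k$, the graph $G_e=G-e-S_e$ has exactly the two odd components $O_1\ni u$ and $O_2\ni v$, both nontrivial, and $G_e$ has no even component. In particular $S_e\subseteq V(G)\setminus\{u,v\}$, so $V(G)=S_e\cup V(O_1)\cup V(O_2)$ is a disjoint union.

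The key step is the observation that the only edge of $G$ joining $V(O_1)$ to $V(O_2)$ is $e=uv$. Indeed, any edge of $G$ between $V(O_1)$ and $V(O_2)$ other than $e$ would have both end vertices outside $S_e$, hence would survive in $G_e=G-e-S_e$ and connect the two components $O_1$ and $O_2$, contradicting that these are distinct components of $G_e$. Since $e=uv$ has no end vertex in $V(O_1)$ apart from $u$, the same observation shows that $G[V(O_1)]$ coincides with $O_1$ and is therefore connected.

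With this in hand the conclusion follows by bookkeeping. Put $X=S_e\cup\{v\}$. Then $|X|=k+1$ because $v\notin S_e$, giving~{\rm (\romannumeral1)}, and $v\in X$ gives~{\rm (\romannumeral2)}. In $G-X=G-S_e-\{v\}$ the set $V(O_1)$ induces the connected graph $O_1$ and, by the key step, sends no edge to $V(O_2)\setminus\{v\}$; hence $O_1$ is a component of $G-X$, and it is odd since $O_1$ is an odd component of $G_e$. Taking $C_u=O_1$ yields~{\rm (\romannumeral3)}. Finally, every edge from $V(C_u)=V(O_1)$ to $v$ lies between $V(O_1)$ and $V(O_2)$, so by the key step it equals $uv$; thus $E(V(C_u),\{v\})=\{uv\}$, which is~{\rm (\romannumeral4)}. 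Moreover $X$ is a genuine vertex cut, since $V(O_2)\setminus\{v\}\neq\emptyset$ (as $O_2$ is nontrivial), so $G-X$ is disconnected. Interchanging the roles of $u$ and $v$ (and of $O_1$ and $O_2$) shows in the same way that $S_e\cup\{u\}$ satisfies Property~$Q$ with $C_v=O_2$, which gives the parenthetical statement.

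Since the whole argument rests on the ``unique crossing edge'' observation---which is forced immediately by the component structure of $G_e$---I do not expect any real obstacle: the content of the lemma is essentially a reformulation of the remark preceding its statement. The only point requiring genuine care is to confirm that $G-X$ is truly disconnected (so that $X$ deserves to be called a cut), which is precisely where the nontriviality of $O_2$ guaranteed by Lemma~\ref{lem1}{\rm (\romannumeral1)} is used.
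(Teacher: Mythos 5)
Your proposal is correct and follows essentially the same route as the paper: the paper also takes $S_{uv}$ to be the smallest set from Theorem~\ref{minimal}, notes that $uv$ is the unique edge between the two odd components of $G_{uv}$, and concludes that $S_{uv}\cup\{v\}$ (resp.\ $S_{uv}\cup\{u\}$) satisfies Property~$Q$ with $C_u=O_1$ (resp.\ $C_v=O_2$). Your additional check that $G-X$ is genuinely disconnected, via the nontriviality of $O_2$ from Lemma~\ref{lem1}(i), is a point the paper leaves implicit but is handled correctly.
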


We are now ready to prove Theorem \ref{main1}.

\bigskip
\noindent{\bf Proof of Theorem \ref{main1}.}
By Lemma \ref{lem}, the result holds for $k=1$.
Thus we assume that $k\geq 2$ in the following.

Suppose to the contrary that $\delta(G) \geq k+2 \geq 4$.
By Proposition \ref{prop1}, for any edge $e$ of $G$, $e$ is either of
type 1 or of type 2.
Combining with Proposition \ref{prop2}, every vertex of $G$ is incident with
at least three edges of type 1.

By Lemma \ref{lem00}, for every edge of type 1 of $G$,
there exists a $(k+1)$-vertex cut satisfying Property $Q$.
Among all $(k+1)$-vertex cuts satisfying Property $Q$ of $G$,
we choose a $(k+1)$-vertex cut $X_{e}^{u}$ such that
the odd component of $G-X_{e}^{u}$ containing $u$ is of the minimum order,
where $e=uv\in E(G)$.
Let $G_{1}$ be the odd component of $G-X_{e}^{u}$ containing $u$ and
$G_{2}=G-X_{e}^{u}-V(G_{1})$ (as shown in Fig. \ref{tu-03} (a)).
Since $E(V(G_1), \{v\})=\{uv\}$, every perfect matching
of $G-(X_{e}^{u}\backslash \{v\})$ contains $uv$. Then $|V(G_{2})|$ is even.

\begin{figure}[h]
\centering
\includegraphics[height=4.5cm,width=8.6cm]{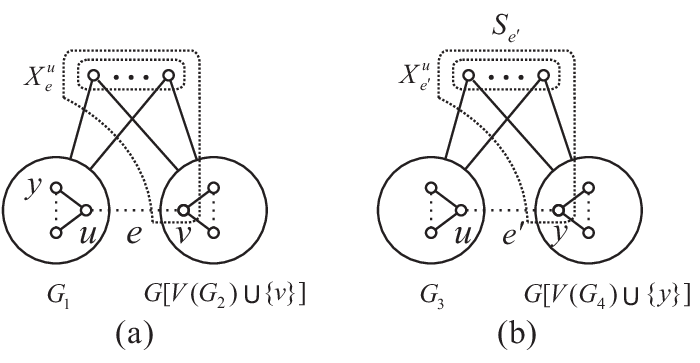}
\caption{\label{tu-03} (a) $X_{e}^{u}$ satisfies Property $Q$;
      (b) $X_{e'}^{u}$ satisfies Property $Q$.}
\end{figure}

Since $d_{G}(u)\geq k+2$, $G_{1}$ is nontrivial.
Then $|N(u) \cap V(G_{1})|\geq 1$. Let $y\in N(u) \cap V(G_{1})$ and $e'=uy$.
Because $v\in N(u)$ and $v\notin N(y)$,
$N(u) \backslash \{y\} \neq N(y)\backslash \{u\}$.
By Propositions \ref{prop1} and \ref{prop0}, $e'$ is of type 1.
Then there exists $S_{e'} \subseteq V(G)\backslash \{u, y\}$ with $|S_{e'}|=k$
such that $G-e'-S_{e'}$ has exactly two odd components containing
$u$ and $y$, respectively.
Let $X_{e'}^{u}=S_{e'} \cup \{y\}$.
By Lemma \ref{lem00}, $X_{e'}^{u}$ satisfies Property $Q$.
Let $G_{3}$ be the odd component of $G-X_{e'}^{u}$ containing $u$ and
$G_{4}=G-X_{e'}^{u}-V(G_{3})$ (as shown in Fig. \ref{tu-03} (b)).
Since $G[V(G_4)\cup \{y\}]$ is an odd component of $G-e'-S_{e'}$,
$|V(G_{4})|$ is even.

For convenience, we present some notation below.

Let $T=X_{e}^{u} \cap X_{e'}^{u}$, $X'=X_{e}^{u} \cap V(G_{3})$, $X''=X_{e}^{u} \cap V(G_{4})$,
$Y'=X_{e'}^{u} \cap V(G_{1})$ and $Y''=X_{e'}^{u}\cap V(G_{2})$.
Then $X_{e}^{u}=X' \cup X'' \cup T$ and $X_{e'}^{u}=Y' \cup Y'' \cup T$.
Let $I_{0}=V(G_{1})$ $\cap$ $V(G_{3})$, $I_{1}=V(G_{1}) \cap V(G_{4})$,
$I_{2}=V(G_{2}) \cap V(G_{4})$ and $I_{3}=V(G_{2}) \cap V(G_{3})$
(as shown in Fig. \ref{tu-04}). So $u \in V(G_{1}) \cap V(G_{3})=I_{0}$.
Since $y \in V(G_{1})$ and $y\in X_{e'}^{u}$, $y\in X_{e'}^{u} \cap V(G_{1})=Y'$.
Moreover, $v \in X' \cup T$ as $v \in N(u)$.

\begin{figure}[h]
\centering
\includegraphics[height=5cm,width=5cm]{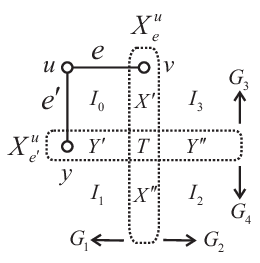}
\caption{\label{tu-04} $X_{e}^{u}$ and $X_{e'}^{u}$ separate the graph $G$.}
\end{figure}

If $N(u) \cap I_{0}\neq \emptyset$, then let $u_0 \in N(u) \cap I_{0}$.
Since $N(v) \cap V(G_{1})=\{u\}$ and $N(y)$ $\cap$ $V(G_{3})=\{u\}$,
$\{vy, u_0v, u_0y\} \cap E(G)=\emptyset$.
Then $G[\{u, v, y, u_0\}]$ is a claw, a contradiction.
So $N(u) \cap I_{0}=\emptyset$. Thus

\begin{equation}\label{30}
N(u) \subseteq X' \cup Y' \cup T.
\end{equation}

Since $\delta(G) \geq k+2$, we deduce that
\begin{equation}\label{31}
|X'|+|Y'|+|T|\geq d_{G}(u) \geq k+2.
\end{equation}
Combining with $|X_{e}^{u}|+|X_{e'}^{u}|=2k+2$, we obtain that
\begin{equation}\label{32}
|X''|+|Y''|+|T| \leq k.
\end{equation}
Furthermore, by (\ref{32}), $|Y'|+|Y''|+|T|=k+1>|X''|+|Y''|+|T|$,
which implies that
\begin{equation}\label{032}
|X''|<|Y'|.
\end{equation}

\begin{claim}\label{3}
$I_{2} \neq \emptyset$.
\end{claim}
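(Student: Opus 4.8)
The plan is to bring the minimality governing the choice of $X_e^u$ to bear on a \emph{second} distinguished odd component, namely the one containing $y$ rather than $u$. Recall that $e'=uy$ has already been shown to be of type $1$. Hence Lemma \ref{lem00}, in its resp.\ form, guarantees that $S_{e'}\cup\{u\}$ is a $(k+1)$-vertex cut satisfying Property $Q$, and the distinguished odd component of this cut (the one containing the marked vertex, here $y$) is exactly $G[V(G_4)\cup\{y\}]$. Indeed, this is the odd component of $G-e'-S_{e'}$ containing $y$ recorded in the construction of $X_{e'}^u$; since $e'=uy$ is the unique edge joining the two odd components of $G-e'-S_{e'}$ (Lemma \ref{lem1}, type $1$), deleting $u$ instead of $y$ from $G-S_{e'}$ leaves $G[V(G_4)\cup\{y\}]$ intact as a component of $G-(S_{e'}\cup\{u\})$. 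Thus $G[V(G_4)\cup\{y\}]$, of order $|V(G_4)|+1$, is a legitimate distinguished odd component of a Property-$Q$ cut of $G$.

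Next I would invoke the defining minimality of $X_e^u$: among all Property-$Q$ cuts, $|V(G_1)|$ is least over their distinguished odd components. Comparing $V(G_1)$ with $V(G_4)\cup\{y\}$ therefore gives $|V(G_1)|\le |V(G_4)|+1$. Using the disjoint decompositions $V(G_1)=I_0\cup I_1\cup Y'$ and $V(G_4)=I_1\cup I_2\cup X''$, this becomes $|I_0|+|Y'|\le |I_2|+|X''|+1$. Combining with (\ref{032}), which reads $|X''|<|Y'|$, i.e.\ $|X''|+1\le |Y'|$, I obtain $|I_0|+|Y'|\le |I_2|+|X''|+1\le |I_2|+|Y'|$, and hence $|I_0|\le |I_2|$. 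Since $u\in I_0$ forces $|I_0|\ge 1$, this yields $|I_2|\ge 1$, so $I_2\neq\emptyset$ as claimed. (Equivalently, if one prefers the contradiction phrasing, $I_2=\emptyset$ would force $I_0=\emptyset$, contradicting $u\in I_0$.)

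The one step I would set up with care, and the only place any subtlety lies, is the first paragraph: identifying $G[V(G_4)\cup\{y\}]$ as the distinguished odd component of the cut $S_{e'}\cup\{u\}$, so that the minimality of $|V(G_1)|$ is genuinely applicable to it. This hinges on the type-$1$ structure of $e'$ guaranteed by Lemma \ref{lem1} (exactly two odd components, no even component, joined only by $e'$), which ensures that removing $u$ rather than $y$ does not alter the component carrying $y$. Once this identification is secured, the remainder is the short arithmetic above, and the claw-free hypothesis enters only indirectly through the structural properties of $X_e^u$ and $X_{e'}^u$ already established.
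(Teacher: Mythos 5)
Your proof is correct and follows essentially the same route as the paper's: both identify $S_{e'}\cup\{u\}$ as a Property-$Q$ cut whose distinguished odd component is $G[V(G_4)\cup\{y\}]$, and then play the minimality of $|V(G_1)|$ against inequality (\ref{032}). The only difference is presentational: the paper assumes $I_2=\emptyset$ and derives a size contradiction, whereas you run the same comparison directly and extract the slightly stronger conclusion $|I_0|\le |I_2|$.
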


\noindent Proof of Claim \ref{3}.
Suppose to the contrary that $I_{2}=\emptyset$. Then $|V(G_{4})|=|I_{1} \cup X''|$.
So $G[V(G_{4})\cup \{y\}]=G[I_{1} \cup X'' \cup \{y\}]$.
Since $G[V(G_{4})\cup \{y\}]$ is an odd component of $G_{e'}$,
$G[I_{1} \cup X'' \cup \{y\}]$ is connected. Then for the edge $uy$,
there exists a $(k+1)$-vertex cut $(X_{e'}^{u} \backslash \{y\}) \cup \{u\}$
such that $G[I_{1} \cup X'' \cup \{y\}]$ is an odd component
of $G-((X_{e'}^{u} \backslash \{y\}) \cup \{u\})$ containing $y$.
Moreover, $u$ has unique neighbor $y$ in $G[I_{1} \cup X'' \cup \{y\}]$.
So $(X_{e'}^{u} \backslash \{y\}) \cup \{u\}$ satisfies Property $Q$.
By (\ref{032}),
$|V(G[I_{1} \cup X'' \cup \{y\}])| \leq |V(G[I_{1} \cup Y'])|<|V(G_{1})|$,
contradicting the minimality of $G_{1}$.
\hfill $\square$

\smallskip
By Claim \ref{3}, $X'' \cup Y'' \cup T$ is a vertex cut.
Since $G$ is $k$-connected, $|X''|+|Y''|+|T|\geq k$.
By (\ref{32}), we have
\begin{equation}\label{13}
|X''|+|Y''|+|T|=k.
\end{equation}
Then
\begin{equation}\label{14}
|X'|+|Y'|+|T|=k+2.
\end{equation}
Combining (\ref{30}), (\ref{14}) and $d_{G}(u)\geq k+2$,
we have the following claim.

\begin{claim}\label{40}
$N(u)=X' \cup Y' \cup T$.
\end{claim}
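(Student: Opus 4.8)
Claim \ref{40} asserts that $N(u) = X' \cup Y' \cup T$. Looking at what has been established: inequality (\ref{30}) already gives $N(u) \subseteq X' \cup Y' \cup T$, so only the reverse containment—or equivalently an equality of cardinalities—remains. The key numerical facts are (\ref{14}), which says $|X'| + |Y'| + |T| = k+2$, together with the hypothesis $d_G(u) \geq k+2$ and the standing assumption (for contradiction in the main proof) that $\delta(G) \geq k+2$. So let me sketch the proof.

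Let me think about this carefully. We have the reverse inclusion to prove, but cardinality counting should suffice.

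The plan is to prove equality by comparing cardinalities. From (\ref{30}) we already have $N(u) \subseteq X' \cup Y' \cup T$, so $d_G(u) = |N(u)| \leq |X' \cup Y' \cup T|$. Since $X'$, $Y'$, $T$ are pairwise disjoint (being intersections of $X_e^u$ and $X_{e'}^u$ with the disjoint pieces $V(G_1), V(G_2), V(G_3), V(G_4)$ and the common part $T$), we have $|X' \cup Y' \cup T| = |X'| + |Y'| + |T|$, which equals $k+2$ by (\ref{14}). Thus $d_G(u) \leq k+2$. On the other hand, $d_G(u) \geq k+2$ by hypothesis. Combining these forces $d_G(u) = k+2 = |X' \cup Y' \cup T|$, and since $N(u)$ is a subset of $X' \cup Y' \cup T$ of the same cardinality, the two sets must coincide.

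I expect the proof to be essentially immediate: it is a squeeze between the upper bound coming from the containment (\ref{30}) and the disjointness of the three sets, and the lower bound $d_G(u) \geq k+2$. The only point requiring a word of care is the disjointness of $X'$, $Y'$, and $T$, which I would note follows directly from their definitions as intersections with the mutually disjoint regions $I_0, I_1, I_2, I_3$ and cut-sets, together with $u \notin X_e^u \cup X_{e'}^u$. I do not anticipate any genuine obstacle here; the substantive work was already done in establishing (\ref{14}), and this claim merely packages that cardinality identity together with (\ref{30}) into the statement that the inclusion is in fact an equality.
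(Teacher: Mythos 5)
Your proposal is correct and matches the paper's own argument: the paper derives Claim \ref{40} in one line by combining the containment (\ref{30}), the cardinality identity (\ref{14}), and the assumption $d_G(u)\geq k+2$, which is exactly your squeeze. Your explicit remark on the pairwise disjointness of $X'$, $Y'$, $T$ is a harmless elaboration of what the paper leaves implicit.
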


\begin{claim}\label{4}
$X'' \neq \emptyset$.
\end{claim}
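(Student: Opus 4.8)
\noindent\textbf{Proof proposal for Claim \ref{4}.}
The plan is to argue by contradiction: suppose $X''=\emptyset$ and then show that the odd component $G[V(G_4)\cup\{y\}]$ of $G_{e'}$ is forced to be disconnected, which is impossible. The whole argument will rest on the two-component structure coming from the type-1 edge $e=uv$, rather than on the minimality of $G_1$ or on claw-freeness.

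The first step is to record the relevant adjacencies. Since $e=uv$ is of type $1$, the graph $G_e=G-uv-S_e$ has exactly two odd components, $O_1=G[V(G_1)]$ containing $u$ and $O_2=G[V(G_2)\cup\{v\}]$ containing $v$, and the only edge of $G$ joining $V(O_1)$ to $V(O_2)$ is $uv$ itself (otherwise $O_1$ and $O_2$ would not be distinct components of $G_e$). This single fact controls everything I need.

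Next I would read off two consequences. As $y\in V(G_1)=V(O_1)$ and $y\neq u$, the vertex $y$ can have no neighbor in $V(O_2)=V(G_2)\cup\{v\}$; since $I_2\subseteq V(G_2)$, this yields $N(y)\cap I_2=\emptyset$. By the same reasoning, $I_1\subseteq V(O_1)$ and $I_2\subseteq V(O_2)$ with $u\notin I_1$ and $v\notin I_2$, so no edge of $G$ joins $I_1$ to $I_2$, i.e. $E(I_1,I_2)=\emptyset$. Finally, with $X''=\emptyset$ we have $V(G_4)=I_1\cup I_2$, hence $G[V(G_4)\cup\{y\}]=G[I_1\cup I_2\cup\{y\}]$. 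By the previous observations no vertex of $I_2$ is adjacent to $y$ or to any vertex of $I_1$, so $I_2$ is separated from $I_1\cup\{y\}$ in this subgraph. Since $I_2\neq\emptyset$ by Claim \ref{3} and $y$ lies in the subgraph, $G[I_1\cup I_2\cup\{y\}]$ is disconnected, contradicting the fact that it is an odd (hence connected) component of $G_{e'}$. Therefore $X''\neq\emptyset$.

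I expect the only real content to lie in the first step: recognizing that it is precisely the two-component structure of $G_e$, with $uv$ as the unique link between $O_1$ and $O_2$, that forbids the cross edges $E(I_1,I_2)$ and $N(y)\cap I_2$. Once these non-adjacencies are in place, the contradiction with the connectedness of $G[V(G_4)\cup\{y\}]$ is immediate, and no appeal to the minimality of $G_1$ is needed.
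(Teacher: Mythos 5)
Your proof is correct and is essentially the paper's own argument: assuming $X''=\emptyset$, the set $I_2$, nonempty by Claim \ref{3}, sends no edge to $I_1\cup\{y\}$, so $G[V(G_4)\cup\{y\}]=G[I_1\cup I_2\cup\{y\}]$ would be disconnected, contradicting that it is an odd component of $G_{e'}$ (which is connected since $e'$ is of type 1). The one caveat concerns your justification of the non-adjacencies: the paper chooses $X_e^u$ among \emph{all} $(k+1)$-vertex cuts satisfying Property $Q$ (minimizing $|V(G_1)|$), not necessarily of the form $S_e\cup\{v\}$, so the identification $O_1=G[V(G_1)]$, $O_2=G[V(G_2)\cup\{v\}]$ is not available; however, the facts you need, $E(I_1\cup\{y\},I_2)=\emptyset$ and $N(y)\cap I_2=\emptyset$, follow at once from $G_1$ being a connected component of $G-X_e^u$ together with Property $Q$, so your argument goes through unchanged.
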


\noindent Proof of Claim \ref{4}. Suppose to the contrary that $X''=\emptyset$.
Then $G[V(G_{4}) \cup \{y\}]=G[I_{1} \cup I_{2} \cup \{y\}]$.
Since $e'$ is of type 1 and $G[V(G_{4}) \cup \{y\}]$ is an odd
component of $G_{e'}$, $G[V(G_{4}) \cup \{y\}]$ is connected.
But $G[I_{1} \cup I_{2} \cup \{y\}]$ is disconnected because
there is no path connecting
$I_{1} \cup \{y\}$ and $I_{2}$ through a vertex in $X''$, a contradiction.
\hfill $\square$

\smallskip
Suppose that $|X''|+|Y'|+|T|\leq k$.
If $|X''|+|Y'|+|T|<k$, then $I_{1}=\emptyset$ as $G$ is $k$-connected.
Thus $N(y) \subseteq X'' \cup (Y' \backslash \{y\}) \cup T \cup \{u\}$.
So $d_{G}(y)<k$, a contradiction to the assumption that $\delta(G)\geq k+2$.

If $|X''|+|Y'|+|T|=k$, then $|I_{1}|$ is even
as $G-(X'' \cup Y' \cup T)$ has a perfect matching.
Thus $G-(X'' \cup (Y' \backslash \{y\}) \cup T \cup \{u\})$
has a subgraph of $G[I_{1} \cup \{y\}]$ with odd order.
So $G-(X'' \cup (Y'\backslash \{y\}) \cup T \cup \{u\})$
has no perfect matching, contradicting that $G$ is $k$-factor-critical.

It follows that
\begin{equation}\label{33}
|X''|+|Y'|+|T| \geq k+1.
\end{equation}
Together with $|X_{e}^{u}|+|X_{e'}^{u}|=2k+2$ again, we deduce that
\begin{equation}\label{34}
|X'|+|Y''|+|T| \leq k+1.
\end{equation}

Next we will prove that (\ref{33}) and (\ref{34}) are true only if
the equality holds in both (\ref{33}) and (\ref{34})
by considering whether $v \in X'$.

If $v\in X'$, then by arguments similar to
those that led to (\ref{33}), we have $|X'|+|Y''|+|T| \geq k+1$.
Combining with (\ref{34}),
we obtain that $|X'|+|Y''|+|T|=k+1$. Then $|X''|+|Y'|+|T|=k+1$.

If $v\notin X'$, then $v \in T$ and further $v \in X_{e'}^{u}$.
By Claim \ref{3} and (\ref{13}), $X'' \cup Y'' \cup T$ is a $k$-vertex cut.
If $N(v) \cap I_2=\emptyset$,
then $X'' \cup Y'' \cup (T\backslash \{v\})$ is a $(k-1)$-vertex cut,
contradicting that $G$ is $k$-connected.
So $N(v) \cap I_{2} \neq \emptyset$ (as shown in Fig. \ref{tu-05}).

\begin{figure}[h]
\centering
\includegraphics[height=5cm,width=5cm]{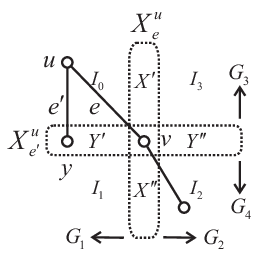}
\caption{\label{tu-05} $N(v) \cap I_{2} \neq \emptyset$ when $v\notin X'$.}
\end{figure}

We show that $|X'|+|Y''|+|T|=k+1$.
It suffices to show that $|X'|+|Y''|+|T|\geq k+1$ by (\ref{34}).

Suppose to the contrary that $|X'|+|Y''|+|T| \leq k$.
We claim that $I_{3}=\emptyset$. Otherwise,
$X' \cup Y'' \cup T$ would be a $k$-vertex cut as $G$ is $k$-connected,
that is, $|X'|+|Y''|+|T|=k$. So $N(v) \cap I_{3} \neq \emptyset$.
Let $v_2\in N(v) \cap I_{2}$ and $v_3\in N(v) \cap I_{3}$.
Then $G[\{v, v_2, v_3, u\}]$ is a claw, a contradiction.

Since $|X'|+|Y''|+|T|\leq k$ and $|Y'|+|Y''|+|T|=k+1$, $|X'|<|Y'|$.
Noticing that $G_{1}$ is of the minimum order,
$|V(G_{1})| \leq |V(G_{3})|$.
So $|I_{0} \cup I_{1} \cup Y'| \leq |I_{0} \cup I_{3} \cup X'|=|I_{0} \cup X'|$.
Then $|Y'|\leq |I_{1}|+|Y'| \leq |X'|<|Y'|$, a contradiction.

Thus $|X'|+|Y''|+|T| \geq k+1$. By (\ref{34}), we obtain that
\begin{equation}\label{35}
|X'|+|Y''|+|T|=k+1
\end{equation}
and then
\begin{equation}\label{36}
|X''|+|Y'|+|T|=k+1.
\end{equation}
Since $|Y'|+|Y''|+|T|=k+1$, by (\ref{35}), we have
\begin{equation}\label{360}
|X'|=|Y'|.
\end{equation}

\begin{claim}\label{5}
$N(y) \cap I_{1}\neq \emptyset$.
\end{claim}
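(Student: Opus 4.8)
\noindent Proof of Claim \ref{5} (plan).
The plan is to argue by contradiction, so I assume $N(y)\cap I_1=\emptyset$. Since $uy=e'$ is of type $1$, $G[V(G_4)\cup\{y\}]$ is a connected odd component of $G_{e'}$ with $V(G_4)=I_1\cup I_2\cup X''$; in particular $y$ has at least one neighbour in $V(G_4)$ (as $I_2\neq\emptyset$ by Claim \ref{3}). Because $y\in V(G_1)$ while $I_2\subseteq V(G_2)$ and $X_e^u$ separates $V(G_1)$ from $V(G_2)$, we have $N(y)\cap I_2=\emptyset$. Together with the assumption $N(y)\cap I_1=\emptyset$, the only place in $V(G_4)$ where $y$ can have a neighbour is $X''$, so I may fix some $x\in N(y)\cap X''$ (which in particular re-establishes $X''\neq\emptyset$, consistent with Claim \ref{4}).

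Next I would use claw-freeness with centre $y$ to confine $N(y)$. First, $ux\notin E(G)$: by Claim \ref{40} we have $N(u)=X'\cup Y'\cup T$, which is disjoint from $X''$. Now suppose $y$ had a neighbour $z\in I_0\setminus\{u\}$. Then $uz\notin E(G)$ since $N(u)\cap I_0=\emptyset$, and $xz\notin E(G)$ since $x\in X''\subseteq V(G_4)$ and $z\in I_0\subseteq V(G_3)$ lie in distinct components of $G-X_{e'}^u$; hence $G[\{y,u,x,z\}]$ would be a claw, a contradiction. Therefore $N(y)\cap(I_0\setminus\{u\})=\emptyset$. Since $N(y)\subseteq V(G_1)\cup X_e^u=(I_0\cup I_1\cup Y')\cup(X'\cup X''\cup T)$, the two exclusions $N(y)\cap I_1=\emptyset$ and $N(y)\cap(I_0\setminus\{u\})=\emptyset$ give $N(y)\subseteq\{u\}\cup(Y'\setminus\{y\})\cup X'\cup X''\cup T$.

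Finally I would turn this neighbourhood restriction into a violation of the minimality of $G_1$, in the spirit of the proof of Claim \ref{3}. Since $N(I_1)\subseteq I_1\cup X''\cup Y'\cup T$ and $y$ is now non-adjacent to $I_1$, the set $(Y'\setminus\{y\})\cup X'\cup X''\cup T\cup\{u\}$ separates $\{y\}$ (and the block $I_1$) from $I_2$; from here one aims to construct a $(k+1)$-vertex cut containing $u$ that satisfies Property $Q$ for the edge $uy$ (Definition \ref{def2}) whose odd component containing $y$ is strictly smaller than $G_1$, using $|X''|<|Y'|$ from (\ref{032}) and $|X'|=|Y'|$ from (\ref{360}) exactly as the bound $|V(G[I_1\cup X''\cup\{y\}])|<|V(G_1)|$ was obtained in Claim \ref{3}. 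This would contradict the choice of $G_1$ as the odd component of minimum order among all Property-$Q$ cuts. The main obstacle is precisely this last step: unlike in Claim \ref{3}, here $I_2\neq\emptyset$, so the separating set above has size $k+1+|X'|>k+1$ and is not itself of the right cardinality; exhibiting a genuinely smaller Property-$Q$ odd component therefore requires first peeling off $I_2$ (equivalently, pinning down the parity of $I_1$ through the $k$-factor-criticality of $G$ and Theorem \ref{Tutte}) before the size comparison can be applied, and the degenerate subcase $I_1=\emptyset$ must be treated separately. Managing this bookkeeping is where the real work lies.
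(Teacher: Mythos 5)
There is a genuine gap, and it is exactly the one you flag at the end of your plan: your neighbourhood bound $N(y)\subseteq\{u\}\cup(Y'\setminus\{y\})\cup X'\cup X''\cup T$ still contains $X'$, so it only gives $d_G(y)\leq k+1+|X'|$, which is useless; and your fallback route (a minimality-of-$G_1$ contradiction after ``peeling off $I_2$'') is never actually carried out. The observation you missed is that $X'$ can be excluded outright, with no extra work: $X'=X_{e}^{u}\cap V(G_3)\subseteq V(G_3)$, and the cut $X_{e'}^{u}$ satisfies Property $Q$ (Lemma \ref{lem00}), whose item {\rm(\romannumeral4)} says $E(V(G_3),\{y\})=\{uy\}$. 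Hence $N(y)\cap V(G_3)=\{u\}$, so $N(y)\cap X'=\emptyset$ and also $N(y)\cap(I_0\setminus\{u\})=\emptyset$ --- your claw argument centred at $y$ for the latter exclusion is correct but redundant, since both exclusions are instances of the same Property-$Q$ fact (which the paper itself uses, e.g.\ in the proof of Claim \ref{9}).

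Once $X'$ is excluded, the claim closes in one line, and this is precisely the paper's proof: under the assumption $N(y)\cap I_1=\emptyset$, since $y\in V(G_1)$ has no neighbours in $V(G_2)$, one gets $N(y)\subseteq X''\cup(Y'\setminus\{y\})\cup T\cup\{u\}$, and by (\ref{36}) this set has cardinality $|X''|+|Y'|+|T|=k+1$, so $d_G(y)\leq k+1$, contradicting $\delta(G)\geq k+2$. No comparison with $|V(G_1)|$, no parity analysis of $I_1$, and no separate treatment of $I_1=\emptyset$ is needed; you reached for the heavy tool (minimality of $G_1$, in the style of Claim \ref{3}) where a direct degree count suffices. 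As written, your proposal is an incomplete proof: the step you defer as ``bookkeeping'' is the entire remaining difficulty of your chosen route, and it is avoided rather than solved by the missing Property-$Q$ observation.
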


\noindent Proof of Claim \ref{5}.
Suppose to the contrary that $N(y) \cap I_{1}=\emptyset$,
then $N(y) \subseteq X'' \cup (Y' \backslash \{y\}) \cup T \cup \{u\}$.
By (\ref{36}), $|X'' \cup (Y' \backslash \{y\})\cup T \cup \{u\}|=k+1$.
So $d_{G}(y) \leq k+1$, contradicting that $\delta(G)\geq k+2$.
\hfill $\square$

\begin{claim}\label{6}
$G[I_{1}\cup \{y\}]$ is connected.
\end{claim}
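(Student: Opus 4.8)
The plan is to argue by contradiction, exploiting the fact that $O_y:=G[V(G_4)\cup\{y\}]=G[I_1\cup X''\cup I_2\cup\{y\}]$ is a connected odd component of $G_{e'}=G-e'-S_{e'}$, together with the claw-free condition and the $k$-connectivity of $G$. First I would record the two separation facts that make the combinatorics rigid: since the cut $X_e^u$ separates $G_1$ from $G_2$ there is no edge between $I_1$ and $I_2$, and since $y\in V(G_1)$ there is no edge between $y$ and $I_2$. Consequently, inside the connected graph $O_y$, every component of $G[I_1\cup\{y\}]$ is attached to the remainder of $O_y$ only through $X''$.

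The key preparatory step is a claw argument showing that each vertex $x\in X''$ is adjacent to at most one component of $G[I_1\cup\{y\}]$. Since $v\in X'\cup T$ we have $v\notin X''$, whence $X''\subseteq S_e$; as $e$ is of type $1$, statement (\romannumeral1) of Lemma \ref{lem1} provides a neighbour $w$ of $x$ in the odd component $O_2$ of $G_e$. If $x$ had neighbours $p,q$ in two distinct components of $G[I_1\cup\{y\}]$, then $p,q\in V(O_1)\setminus\{u\}$ are non-adjacent, while $w\in V(O_2)$ is non-adjacent to both (the only edge between $O_1$ and $O_2$ is $uv$ and $p,q\neq u$), so $G[\{x,p,q,w\}]$ would be a claw, a contradiction.

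Now assume, for contradiction, that $G[I_1\cup\{y\}]$ is disconnected, and let $A$ be the component containing $y$ and $B\subseteq I_1$ a component with $y\notin B$. Using $B\subseteq V(G_1)\cap V(G_4)$ together with the two cuts $X_e^u$ and $X_{e'}^u$, and the fact that $B$ is a full component of $G[I_1\cup\{y\}]$ (so $B$ sends no edge to $(I_1\setminus B)\cup\{y\}$), I would bound $N(B)\setminus B\subseteq (N(B)\cap X'')\cup(Y'\setminus\{y\})\cup T$. This set is a vertex cut isolating $B$ from $u$, so $k$-connectivity forces $|N(B)\cap X''|+|Y'|-1+|T|\ge k$; combined with (\ref{36}) this yields $N(B)\cap X''=X''$, i.e. every vertex of $X''$ is adjacent to $B$.

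Finally I would close the loop: by the claw step each $x\in X''$ meets at most one component, so $X''$ being entirely adjacent to $B$ means no vertex of $X''$ is adjacent to $A$. Since within $O_y$ the component $A$ can reach the rest only through $X''$, this makes $A$ an isolated proper subset of the connected graph $O_y$ (proper because $B\neq\emptyset$ and, by Claim \ref{4}, $X''\neq\emptyset$), a contradiction. I expect the main obstacle to be the claw step together with the exact neighbourhood bookkeeping in the cut estimate: one must track carefully which of $X'$, $X''$, $Y'$, $T$, $I_0$, $I_2$ a vertex of $B$ can see, and it is precisely the interplay of the two cuts that confines $N(B)$ to the small set above and lets (\ref{36}) force equality.
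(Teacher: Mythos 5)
Your argument contains one genuine gap, in the preparatory claw step. You write that since $v\notin X''$ we have $X''\subseteq S_e$, and then invoke statement {\rm (\romannumeral1)} of Lemma \ref{lem1} to give each $x\in X''$ a neighbour $w$ in the odd component $O_2$ of $G_e$. This presumes that the chosen cut $X_e^u$ equals $S_e\cup\{v\}$ for a smallest set $S_e$ as in Theorem \ref{minimal}. It does not: in the proof of Theorem \ref{main1}, $X_e^u$ is chosen among \emph{all} $(k+1)$-vertex cuts satisfying Property $Q$ so as to minimize $|V(G_1)|$, and it need not arise from Theorem \ref{minimal} at all (this generality is essential to the paper, since the competing cuts built in the proofs of Claims \ref{3} and \ref{600} are Property-$Q$ cuts of no special form). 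For such an $X_e^u$ there is no associated $S_e$, $G_e$ or $O_2$: if you set $S:=X_e^u\setminus\{v\}$, then $G-e-S$ need not have exactly two odd components, so Lemma \ref{lem1} does not apply to it. (A smaller quibble: statement {\rm (\romannumeral1)} of Lemma \ref{lem1} never asserts that every vertex of $S_e$ has a neighbour in $O_2$; that fact appears only inside its proof.) Hence the assertion ``each $x\in X''$ is adjacent to at most one component of $G[I_1\cup\{y\}]$'' is, as written, unjustified.

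The gap is local and repairable with facts already available at this point of the paper: by Claim \ref{3} and (\ref{13}), $X''\cup Y''\cup T$ is a $k$-vertex cut of the $k$-connected graph $G$ and $I_2\neq\emptyset$, so every $x\in X''$ has a neighbour $w\in I_2$ (otherwise $(X''\cup Y''\cup T)\setminus\{x\}$ would be a vertex cut of size $k-1$ separating a component inside $I_2$ from $u$). Since $I_2\subseteq V(G_2)$ and $I_1\cup\{y\}\subseteq V(G_1)$ are separated by $X_e^u$, such a $w$ is non-adjacent to your $p$ and $q$, and the claw $G[\{x,p,q,w\}]$ is restored; this is exactly the device the paper itself uses. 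With this substitution the remainder of your proof is correct, and it is a genuinely different route from the paper's: the paper shows (using Claim \ref{5} and a cut-vertex argument) that some vertex of the $k$-cut $X''\cup(Y'\setminus\{y\})\cup T$ has neighbours both in $V(C_1)\setminus\{y\}$ and in a component of $C_2$, and exhibits a claw centred there; you instead force $N(B)\cap X''=X''$ by counting against (\ref{36}) and then contradict the connectivity of the odd component $G[V(G_4)\cup\{y\}]$ of $G_{e'}$. Your version has the mild advantage of not needing Claim \ref{5} at all.
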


\noindent Proof of Claim \ref{6}.
Suppose to the contrary that $G[I_{1} \cup \{y\}]$ is disconnected.
Let $C_{1}$ be the component of $G[I_{1} \cup \{y\}]$ containing $y$
and $C_{2}=G[(I_{1} \cup \{y\})\backslash V(C_{1})]$.
By (\ref{36}), $|X'' \cup (Y' \backslash \{y\}) \cup T|=k$.
Then $X'' \cup (Y' \backslash \{y\}) \cup T$ is a $k$-vertex cut.
Thus every vertex in $X'' \cup (Y' \backslash \{y\}) \cup T$
has a neighbor in every component of $C_{2}$.
By Claim \ref{5}, $C_{1}$ is nontrivial. We claim that
there exists a vertex $x$ in $X'' \cup (Y' \backslash \{y\}) \cup T$
such that $N(x)$ $\cap$ $(V(C_{1}) \backslash \{y\})\neq \emptyset$.
Otherwise, $\{y\}$ is a vertex cut,
contradicting that $G$ is $k$-connected $(k\geq 2)$.
So $x$ has neighbours in both $V(C_{1}) \backslash \{y\}$ and
each component of $C_2$.

If $x \in X''$, then $N(x) \cap I_{2} \neq \emptyset$ as
$X'' \cup Y'' \cup T$ is a $k$-vertex cut.
If $x \in (Y'\backslash \{y\}) \cup T$, then $ux \in E(G)$ by Claim \ref{40}.
In either case, $G$ has a claw, a contradiction.
\hfill $\square$

\begin{claim}\label{600}
$|I_{1}|$ is odd.
\end{claim}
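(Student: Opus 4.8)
The plan is to prove Claim~\ref{600} by contradiction, assembling $I_1\cup\{y\}$ together with a suitable $(k+1)$-vertex cut into a configuration satisfying Property~$Q$ and then contradicting the minimality in the choice of $G_1$. So suppose to the contrary that $|I_1|$ is even, and set $C:=I_1\cup\{y\}$, so that $|C|=|I_1|+1$ is odd. Put $B:=\{u\}\cup(Y'\setminus\{y\})\cup X''\cup T$; since these sets are pairwise disjoint, $(\ref{36})$ gives $|B|=|Y'|+|X''|+|T|=k+1$.

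First I would show that $C$ is an odd component of $G-B$. Every vertex of $I_1=V(G_1)\cap V(G_4)$ has all of its neighbours in $(V(G_1)\cup X_e^u)\cap(V(G_4)\cup X_{e'}^u)=I_1\cup Y'\cup X''\cup T$, while for $y$ one combines $N(y)\subseteq V(G_1)\cup X_e^u$ with $N(y)\cap V(G_3)=\{u\}$ (which holds because $uy$ is of type $1$, so $E(V(G_3),\{y\})=\{uy\}$) to obtain $N(y)\subseteq\{u\}\cup I_1\cup Y'\cup X''\cup T$. Hence $N(C)\setminus C\subseteq B$, so $C$ is a union of components of $G-B$; by Claim~\ref{6} it is connected, and therefore a single component, which is odd by our assumption. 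As $I_2\neq\emptyset$ (Claim~\ref{3}) lies outside $B\cup C$, the set $B$ is genuinely a vertex cut.

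Next I would verify the remaining conditions of Property~$Q$ for the edge $uy$, taking $u$ as the distinguished vertex of the cut: indeed $u\in B$, the vertex $y$ lies in the odd component $C$, and $E(V(C),\{u\})=\{uy\}$. The last equality follows from $N(u)=X'\cup Y'\cup T$ (Claim~\ref{40}), which is disjoint from $I_1$ and meets $C$ only in $y$. Thus $B$ is a $(k+1)$-vertex cut satisfying Property~$Q$ whose odd component containing $y$ is exactly $C$, of order $|I_1|+1$.

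Finally, the minimality in the choice of $X_e^u$ forces $|V(G_1)|\leq|C|=|I_1|+1$. But $V(G_1)=I_0\cup I_1\cup Y'$ with $u\in I_0$ and $y\in Y'$, so $|V(G_1)|\geq|I_1|+2$, a contradiction; therefore $|I_1|$ is odd. The main obstacle is precisely the recognition that $I_1\cup\{y\}$ and $B$ form a Property-$Q$ configuration for the edge $uy$; once this is in place the parity falls out of minimality, and the only delicate points are the neighbourhood containment $N(C)\setminus C\subseteq B$ that makes $C$ a true component and the single-edge condition $E(V(C),\{u\})=\{uy\}$, both reducing to the regional neighbourhood computations together with Claims~\ref{40} and~\ref{6}.
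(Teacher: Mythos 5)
Your proposal is correct and follows essentially the same route as the paper: assume $|I_1|$ even, use Claim \ref{6} and (\ref{36}) to exhibit $X''\cup(Y'\setminus\{y\})\cup T\cup\{u\}$ as a $(k+1)$-vertex cut satisfying Property $Q$ for the edge $uy$ with odd component $G[I_1\cup\{y\}]$, and contradict the minimality of $G_1$. Your write-up merely makes explicit some steps the paper leaves implicit (that $N(I_1\cup\{y\})\setminus(I_1\cup\{y\})$ is contained in the cut, and that $I_2\neq\emptyset$ guarantees the set is genuinely a vertex cut), which is a sound elaboration rather than a different argument.
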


\noindent Proof of Claim \ref{600}.
Suppose to the contrary that $|I_{1}|$ is even. By Claim \ref{6},
$G[I_{1}\cup \{y\}]$ is a connected graph of odd order.
By (\ref{36}), $|X'' \cup (Y' \backslash \{y\}) \cup T \cup \{u\}|=k+1$.
Since $E(I_{1}\cup \{y\}, \{u\})=\{uy\}$,
$X'' \cup (Y' \backslash \{y\}) \cup T \cup \{u\}$ satisfies Property $Q$.
But $|V(G[I_{1}\cup \{y\}])|<|V(G_1)|$,
a contradiction to the minimality of $G_{1}$.
\hfill $\square$

\begin{claim}\label{7}
$T=\emptyset$.
\end{claim}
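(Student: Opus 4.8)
The plan is to argue by contradiction: assume $T\neq\emptyset$ and fix a vertex $t\in T$. First I would record the ``grid'' adjacency pattern produced by the two crossing cuts $X_{e}^{u}$ and $X_{e'}^{u}$. Because $I_{0},I_{1}$ lie in $V(G_{1})$ while $I_{2},I_{3}$ lie in $V(G_{2})$, and $I_{0},I_{3}$ lie in $V(G_{3})$ while $I_{1},I_{2}$ lie in $V(G_{4})$, any edge between two distinct regions among $I_{0},I_{1},I_{2},I_{3}$ would cross one of the two cuts; hence these four regions are pairwise non-adjacent. Consequently each $I_{j}$ sends its outside edges only into the cut-parts: $I_{0}$ into $X'\cup Y'\cup T$, $I_{1}$ into $X''\cup Y'\cup T$, $I_{2}$ into $X''\cup Y''\cup T$, and $I_{3}$ into $X'\cup Y''\cup T$. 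In particular $u\in I_{0}$ and $t\in N(u)$ by Claim \ref{40}, so $tu\in E(G)$ and $t$ already has a neighbor in $I_{0}$.

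Next I would extract two forced ``far'' neighbors of $t$. Since $e=uv$ is of type $1$, every vertex of $S_{uv}=X_{e}^{u}\setminus\{v\}$ has a neighbor in the odd component $O_{2}=V(G_{2})\cup\{v\}=I_{2}\cup I_{3}\cup Y''\cup\{v\}$ of $G-uv-S_{uv}$; likewise, as $e'=uy$ is of type $1$, every vertex of $S_{uy}=X_{e'}^{u}\setminus\{y\}$ has a neighbor in $O_{2}'=V(G_{4})\cup\{y\}=I_{1}\cup I_{2}\cup X''\cup\{y\}$. Thus, provided $t\neq v$, the vertex $t$ has a neighbor in $O_{2}$ and a neighbor in $O_{2}'$. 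Now claw-freeness enters: since the four $I$-regions are mutually non-adjacent and $t$ already meets $I_{0}$, the vertex $t$ can meet at most one further region among $I_{1},I_{2},I_{3}$ (three pairwise non-adjacent neighbors of $t$ in three distinct regions would form a claw). This rules out the combinations in which the two forced neighbors sit in two different corners (for instance one in $I_{1}$ and one in $I_{3}$), and forces both forced neighbors either to lie in the single shared corner $I_{2}$ or to be absorbed into the cut-parts $Y''$, $X''$, or into $v$, $y$.

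With the location of $t$'s neighbors pinned down, I would finish by converting the existence of $t$ into a contradiction with the minimality of $G_{1}$. The target is a $(k+1)$-vertex cut satisfying Property $Q$ whose distinguished odd component is strictly smaller than $G_{1}$. The natural candidate is $X''\cup Y'\cup T$, which has order $k+1$ by (\ref{36}) and, by the grid structure, isolates $I_{1}$; since $|I_{1}|$ is odd by Claim \ref{600}, $G[I_{1}]$ has an odd component $C$ of order at most $|I_{1}|<|V(G_{1})|=|I_{0}|+|I_{1}|+|Y'|$, and it remains to locate a cut-vertex with a unique neighbor in $C$ so that Property $Q$ holds. In the residual configurations where this reduction is unavailable I would instead exhibit a claw at $t$ using the neighbor $u\in I_{0}$ together with the two forced far-neighbors. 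Finally I would dispose of the two degenerate cases left open above: the case $t=v$, handled directly from $E(V(G_{1}),\{v\})=\{uv\}$, and the case in which $tu$ is of type $2$, using Propositions \ref{prop1}, \ref{prop0} and \ref{prop2} (a vertex is incident with at most one type-$2$ edge, and a type-$2$ edge forces $N(t)\setminus\{u\}=N(u)\setminus\{t\}$, which confines the far-neighbors of $t$ to $v$ and $y$).

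The hard part will be the bookkeeping in the middle step: systematically excluding the possibility that both forced neighbors of $t$ escape into the cut-parts $X''$ and $Y''$ rather than into the common corner $I_{2}$, and, in each surviving configuration, verifying that $X''\cup Y'\cup T$ (or a suitable variant) really satisfies all four conditions of Property $Q$ in Definition \ref{def2} with a strictly smaller odd component. I expect the claw-freeness constraint ``at most one corner beyond $I_{0}$'' to do most of the work, but its interaction with the cut-parts $X'',Y''$ and with the special vertices $v,y$ is where the case analysis appears unavoidable.
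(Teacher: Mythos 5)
Your setup agrees with the paper up to a point: the four regions $I_0,I_1,I_2,I_3$ are indeed pairwise non-adjacent, and every $t\in T$ is adjacent to $u$ by Claim \ref{40}. But the core of the argument is missing, and the tool you chose cannot supply it. Two concrete problems. First, your forced neighbor coming from the edge $e=uv$ rests on treating $X_e^u\setminus\{v\}$ as a smallest Favaron--Shi set; however, $X_e^u$ was chosen only as a $(k+1)$-vertex cut satisfying Property $Q$ that minimizes $|V(G_1)|$, and it need not be of the form $S_e\cup\{v\}$ for any smallest $S_e$ from Theorem \ref{minimal}. In particular $G[V(G_2)\cup\{v\}]$ need not be connected, so it need not be an ``odd component'' at all, and Lemma \ref{lem1} tells you nothing about the vertices of $X_e^u\setminus\{v\}$. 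Second, even for the legitimate instance $X_{e'}^u=S_{e'}\cup\{y\}$, the forced neighbor only lands somewhere in $V(G_4)\cup\{y\}=I_1\cup I_2\cup X''\cup\{y\}$, and you explicitly leave open the case in which the forced neighbors fall into the cut parts $X''$, $Y''$ rather than into corners; that is exactly the case where no claw appears. Your fallback via the set $X''\cup Y'\cup T$ is not a proof either: Property $Q$ requires an associated edge $u'v'$ with $v'$ in the cut having a unique neighbor in the distinguished odd component, and you never produce such an edge. So what you call ``the hard part'' is precisely the content of the claim, and it is absent.

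The paper closes this gap with two sharper pinning arguments, neither of which uses the type-1 structure. (a) By Claim \ref{3} and (\ref{13}), $X''\cup Y''\cup T$ is a cut of size exactly $k$, and all neighbors of the nonempty set $I_2$ outside $I_2$ lie in it; since $G$ is $k$-connected, every vertex of this cut, in particular every $t\in T$, must have a neighbor in $I_2$ itself, for otherwise deleting the remaining $k-1$ vertices would disconnect $G$. (b) By Claim \ref{600}, $|I_1|$ is odd, and by (\ref{36}), $|X''\cup Y'\cup T|=k+1$; if some vertex of this set had no neighbor in $I_1$, then deleting the other $k$ vertices would leave the odd-order subgraph $G[I_1]$ detached from the rest, so the resulting graph would have no perfect matching, contradicting $k$-factor-criticality. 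Facts (a) and (b) place a neighbor $t_1\in I_1$ and a neighbor $t_2\in I_2$ of $t$, which together with $u\in I_0$ are pairwise non-adjacent, so $G[\{t,u,t_1,t_2\}]$ is a claw. Note that (a) is a pure connectivity argument (available because that cut has exactly $k$ vertices) and (b) is a parity-plus-factor-criticality argument (needed because that set has $k+1$ vertices, so connectivity alone gives nothing); your proposal contains neither, and without them the case analysis you postponed cannot be completed along the lines you sketch.
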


\noindent Proof of Claim \ref{7}. Suppose to the contrary that $T \neq \emptyset$.
By Claim \ref{40}, every vertex in $X' \cup Y' \cup T$ is adjacent to $u$.
By Claim \ref{3} and (\ref{13}),
every vertex in $X'' \cup Y'' \cup T$ has a neighbor in $I_{2}$.

Next we will prove that every vertex in $X'' \cup Y' \cup T$ has a neighbor in $I_{1}$.

By Claim \ref{600}, $|I_{1}|$ is odd.
If there is a vertex $x$ in $X'' \cup Y' \cup T$
such that $N(x) \cap I_{1}=\emptyset$.
Then $G-((X'' \cup Y' \cup T) \backslash \{x\})$
has a subgraph of $G[I_{1}]$ with odd order,
yielding $G-((X'' \cup Y' \cup T)\backslash \{x\})$ has no perfect matching.
This contradicts that $G$ is $k$-factor-critical
as $|(X'' \cup Y' \cup T)\backslash \{x\}|=k$.
Then every vertex in $X'' \cup Y' \cup T$ has a neighbor in $I_{1}$.

In a word, every vertex in $T$ has a neighbor in $\{u\}$
(resp. $I_{1}$ and $I_{2}$), which implies that $G$ contains a claw,
a contradiction.
\hfill $\square$

\begin{claim}\label{8}
$|N(u) \cap V(G_{1})| \geq 2$.
\end{claim}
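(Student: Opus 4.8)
The plan is to reduce the claim to a short counting argument by first identifying exactly which neighbours of $u$ lie inside $V(G_1)$. By Claim \ref{40} we have $N(u)=X'\cup Y'\cup T$. Now $X'=X_e^u\cap V(G_3)$ and $T=X_e^u\cap X_{e'}^u$ are both contained in the cut $X_e^u$, which is disjoint from the component $V(G_1)$, whereas $Y'=X_{e'}^u\cap V(G_1)\subseteq V(G_1)$. Intersecting the decomposition $N(u)=X'\cup Y'\cup T$ with $V(G_1)$ therefore collapses the $X'$ and $T$ parts and leaves precisely $N(u)\cap V(G_1)=Y'$. In this way the whole claim becomes equivalent to the inequality $|Y'|\geq 2$.

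To bound $|Y'|$ from below I would combine the prescribed size of the cut $X_e^u$ with the two facts already established in Claims \ref{7} and \ref{4}. Since $X_e^u$ satisfies Property $Q$, its cardinality is $k+1$, and as $X_e^u=X'\cup X''\cup T$ is a disjoint union we get $|X'|+|X''|+|T|=k+1$. By Claim \ref{7} we have $T=\emptyset$ and by Claim \ref{4} we have $X''\neq\emptyset$, i.e.\ $|X''|\geq 1$; these two together force $|X'|\leq k$. Substituting $T=\emptyset$ into (\ref{14}) gives $|X'|+|Y'|=k+2$, whence $|Y'|=k+2-|X'|\geq 2$. Combined with the first paragraph this yields $|N(u)\cap V(G_1)|=|Y'|\geq 2$, as required.

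The only genuinely delicate point is the set identity $N(u)\cap V(G_1)=Y'$: once the partition $\{I_0,I_1,I_2,I_3\}$ and the five cut-pieces $X',X'',Y',Y'',T$ are kept straight, it is immediate that the neighbours of $u$ inside $G_1$ are exactly the vertices of $Y'$, and no claw-freeness or parity input is needed at this stage. After that the claim is a one-line consequence of the cardinality identity (\ref{14}) together with the previously proved $T=\emptyset$ and $X''\neq\emptyset$. I note in particular that I do not expect to invoke (\ref{360}) here: the bound $|X'|\leq k$ coming from $X''\neq\emptyset$ already suffices, so the argument stays purely arithmetic.
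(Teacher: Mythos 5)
Your proof is correct and is essentially the paper's own argument run directly rather than by contradiction: the paper assumes $N(u)\cap V(G_{1})=\{y\}$, deduces $X'=X_{e}^{u}$ and hence $X''=\emptyset$, contradicting Claim \ref{4}, whereas you use Claim \ref{4} (together with Claim \ref{7} and $|X_{e}^{u}|=k+1$) to get $|X'|\leq k$ and then read off $|Y'|=|N(u)\cap V(G_{1})|\geq 2$ from (\ref{14}). Same decomposition, same key ingredients (Claims \ref{40}, \ref{7}, \ref{4}), so there is no substantive difference.
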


\noindent Proof of Claim \ref{8}.
By Claims \ref{40} and \ref{7}, $N(u)=X' \cup Y' \cup T=X' \cup Y'$.
If $y$ is the only neighbor of $u$ in $V(G_{1})$,
then $N(u) \subseteq \{y\} \cup X_{e}^{u}$.
Since $d_{G}(u)\geq k+2$ and $|X_{e}^{u}|=k+1$, $N(u)=\{y\} \cup X_{e}^{u}$.
Then $N(u)=X' \cup Y'$=$\{y\} \cup X_{e}^{u}$.
Thus $X'=X_{e}^{u}$ and $Y'=\{y\}$.
Consequently, $X''=\emptyset$, which contradicts Claim \ref{4}.
\hfill $\square$

\smallskip
By Claim \ref{8}, let $z \in N(u) \cap (V(G_{1})\backslash \{y\})$.

Finally, we consider the edge $uz$. Let $e''=uz$. By Propositions \ref{prop1},
$e''$ is either of type 1 or of type 2.
Since $v\in N(u)$ and $v\notin N(z)$, $N(u) \backslash \{z\} \neq N(z)\backslash \{u\}$.
By Propositions \ref{prop0}, $e''$ is of type 1.
Then there exists $S_{e''} \subseteq V(G)\backslash \{u, z\}$ with $|S_{e''}|=k$
such that $G-e''-S_{e''}$ has exactly two odd components containing
$u$ and $z$, respectively.
Let $X_{e''}^{u}=S_{e''} \cup \{z\}$. By Lemma \ref{lem00},
$X_{e''}^{u}$ satisfies Property $Q$ (as shown in Fig. \ref{tu-06} (a)).
Let $G_{5}$ be the odd component of $G-X_{e''}^{u}$ containing $u$
and $G_{6}=G-X_{e''}^{u}-V(G_{5})$.
Since $G[V(G_6) \cup \{z\}]$ is an odd component of $G-e''-S_{e''}$, $|V(G_6)|$ is even.

s
\begin{figure}[h]
\centering
\includegraphics[height=5cm,width=10cm]{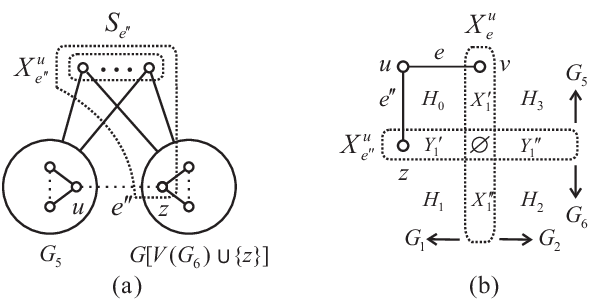}
\caption{\label{tu-06} (a) $X_{e''}^{u}$ satisfies Property $Q$;
(b) $X_{e}^{u}$ and $X_{e''}^{u}$ separate the graph $G$.}
\end{figure}

Again, we view the graph $G$ by $X_{e}^{u}$ and $X_{e''}^{u}$.
Let $T_1=X_{e}^{u}\cap X_{e''}^{u}$,
$X_{1}'=X_{e}^{u} \cap V(G_{5})$, $X_{1}''=X_{e}^{u} \cap V(G_{6})$,
$Y_{1}'=X_{e''}^{u} \cap V(G_{1})$ and $Y_{1}''=X_{e''}^{u}\cap V(G_{2})$.
And let $H_{0}=V(G_{1})$ $\cap$ $V(G_{5})$, $H_{1}=V(G_{1})$ $\cap$ $V(G_{6})$,
$H_{2}=V(G_{2}) \cap V(G_{6})$ and $H_{3}=V(G_{2}) \cap V(G_{5})$
(as shown in Fig. \ref{tu-06} (b)).
By arguments similar to $X_{e}^{u}$ and $X_{e'}^{u}$,
we obtain that $T_1=\emptyset$ and the other similar conclusions as follows:

\begin{equation}\label{21}
|X_{1}''|+|Y_{1}''|=k
\end{equation}
and
\begin{equation}\label{22}
|X_{1}'|+|Y_{1}''|=k+1.
\end{equation}
Together with $|X_{e}^{u}|+|X_{e''}^{u}|=2k+2$, we have
\begin{equation}\label{23}
|X_{1}'|+|Y_{1}'|=k+2
\end{equation}
and
\begin{equation}\label{24}
|X_{1}''|+|Y_{1}'|=k+1.
\end{equation}
Then we also have $N(u)=X_{1}' \cup Y_{1}'$.
Moreover, $G[V(H_1)\cup \{z\}]$ is connected and $|H_1|$ is odd (call this conclusion $(\ddag)$).

\begin{claim}\label{9}
$X_{1}'=X'$, $X_{1}''=X''$, $Y_{1}'=Y'$, $H_0=I_0$ and $H_1=I_1$.
\end{claim}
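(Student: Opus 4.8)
The plan is to first dispatch the three set equalities $Y_1'=Y'$, $X_1'=X'$, $X_1''=X''$ by identifying each set intrinsically, and then to prove $H_0=I_0$ and $H_1=I_1$ by showing that the two cuts $X_{e'}^u$ and $X_{e''}^u$ induce the \emph{same} partition of $V(G_1)$. For the first part I would use that $T=T_1=\emptyset$, so $N(u)=X'\cup Y'=X_1'\cup Y_1'$ by Claim \ref{40} and its analogue. Since $Y',Y_1'\subseteq V(G_1)$ while $X',X_1'\subseteq X_e^u$ and $X_e^u\cap V(G_1)=\emptyset$, intersecting $N(u)$ with $V(G_1)$ and with $X_e^u$ gives $Y'=N(u)\cap V(G_1)=Y_1'$ and $X'=N(u)\cap X_e^u=X_1'$. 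As $X_e^u=X'\cup X''=X_1'\cup X_1''$ are disjoint unions, we also get $X''=X_e^u\backslash X'=X_e^u\backslash X_1'=X_1''$. Writing $W:=V(G_1)\backslash Y'$, the decompositions $V(G_1)=I_0\cup I_1\cup Y'=H_0\cup H_1\cup Y'$ then yield $I_0\cup I_1=H_0\cup H_1=W$.

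It remains to prove $I_1=H_1$, which forces $I_0=H_0$. I would show $I_1\subseteq H_1$; the reverse inclusion follows by interchanging the roles of the two constructions. Suppose not, and pick $w\in B:=I_1\cap H_0$. The key step, and the real content of the claim, is to bound $N(w)$. Since $w\in V(G_1)\cap V(G_4)$, every edge at $w$ stays inside $V(G_1)\cup X_e^u$ and inside $V(G_4)\cup X_{e'}^u$; intersecting these and using $Y''\subseteq V(G_2)$ together with $T=\emptyset$ gives $N(w)\subseteq I_1\cup X''\cup Y'$. As $w\in V(G_5)$ as well, every edge at $w$ also stays inside $V(G_5)\cup X_{e''}^u=V(G_5)\cup Y'\cup Y_1''$.

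Intersecting the two bounds closes the argument. Here I crucially use $X''=X_1''\subseteq V(G_6)$, so $X''\cap V(G_5)=\emptyset$, and $Y_1''\subseteq V(G_2)$: every term except $I_1\cap V(G_5)=B$ and $Y'$ is killed, so $N(w)\subseteq B\cup Y'$. Because $Y'\subseteq X_{e''}^u$ is disjoint from $V(G_5)$, this means $N(w)\cap V(G_5)\subseteq B$ for every $w\in B$; hence $B$ has no edge to $V(G_5)\backslash B$. Since $B\subseteq H_0\subseteq V(G_5)$ and $G_5$ is connected with $u\in V(G_5)\backslash B$, a nonempty $B$ would make $B$ a union of components of $G_5$, a contradiction. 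Thus $B=\emptyset$, i.e. $I_1\subseteq H_1$. The symmetric argument, with $X_1''=X''\subseteq V(G_4)$ and the connected graph $G_3$ in place of $G_5$, gives $H_1\subseteq I_1$, whence $I_1=H_1$ and $I_0=H_0$.

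The main obstacle is precisely this neighbourhood computation: the containment $N(w)\subseteq B\cup Y'$ relies on having already established $X''=X_1''$ and $Y'=Y_1'$, so that $X''$ lies entirely in $V(G_6)$ (off the $u$-side $V(G_5)$) and no vertex of $Y_1''$ can intervene. Once this is in hand, the connectivity of $G_5$ finishes the proof, and notably no further appeal to claw-freeness is needed at this stage.
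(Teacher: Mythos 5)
Your proof is correct, but it takes a genuinely different route from the paper's on both halves of the claim. For the set equalities, the paper uses claw-freeness: since $vy\notin E(G)$ and $v$ (resp.\ $y$, $z$) has no neighbour in $Y'$ (resp.\ $X'$, $X_1'$), the subgraphs $G[X']$, $G[Y']$, $G[X_1']$, $G[Y_1']$ are complete, and the common vertex $v\in X'\cap X_1'$, together with the absence of edges between $V(G_3)$ and $V(G_4)$ (and between $V(G_5)$ and $V(G_6)$), forces $X_1'=X'$ and then $Y_1'=Y'$, $X_1''=X''$. Your argument instead just intersects $N(u)=X'\cup Y'=X_1'\cup Y_1'$ (from Claims \ref{40}, \ref{7} and their analogues for $e''$) with the disjoint sets $X_{e}^{u}$ and $V(G_1)$ --- shorter, and needing no claw-freeness. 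For $H_0=I_0$ and $H_1=I_1$, the paper first shows $|X'|\geq 2$, hence $|Y_1'|\leq k$ by (\ref{23}), and then splits into three cases according to whether $I_0\backslash\{u\}$ and $H_0\backslash\{u\}$ are empty, resolving each case by a $k$-connectivity counting argument (every vertex of a suitable $k$-cut must send an edge into every relevant component). Your argument is uniform and softer: for $w\in B:=I_1\cap H_0$, intersecting the three containments $N(w)\subseteq V(G_1)\cup X_{e}^{u}$, $N(w)\subseteq V(G_4)\cup X_{e'}^{u}$, $N(w)\subseteq V(G_5)\cup X_{e''}^{u}$ (using $T=T_1=\emptyset$, $Y_1'=Y'$ and $X_1''=X''$) gives $N(w)\subseteq B\cup Y'$, so a nonempty $B$ would disconnect the component $G_5$ from $u$; symmetrically $H_1\cap I_0=\emptyset$ using the connectedness of $G_3$. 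This trades the paper's case analysis and cut-counting for pure component-structure reasoning, and --- as you note --- makes no further appeal to claw-freeness or $k$-connectivity inside this claim, whereas the paper's version does; the price is nothing, since all the facts you cite ($T=T_1=\emptyset$, $N(u)=X'\cup Y'=X_1'\cup Y_1'$) are established before Claim \ref{9}. One cosmetic remark: your appeal to $Y''\subseteq V(G_2)$ when deriving $N(w)\subseteq I_1\cup X''\cup Y'$ is unnecessary, since $(V(G_1)\cup X_{e}^{u})\cap(V(G_4)\cup X_{e'}^{u})=I_1\cup Y'\cup X''\cup T$ is immediate from the definitions.
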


\noindent Proof of Claim \ref{9}.
By Claims \ref{40} and \ref{7}, $N(u)=X' \cup Y'$.
Since $N(v) \cap V(G_{1})=\{u\}$ and $N(y) \cap V(G_{3})=\{u\}$,
$N(v) \cap Y'=\emptyset$ and $N(y) \cap X'=\emptyset$.
Because $G$ is claw-free and $vy\notin E(G)$,
both $G[X']$ and $G[Y']$ are complete graphs.
Likewise, both $G[X_{1}']$ and $G[Y_{1}']$ are complete graphs.
Since $v \in X'$ and $v \in X_{1}'$, $X_{1}'=X'$.
Then, by $N(u)=X_{1}' \cup Y_{1}'=X' \cup Y'$, we obtain that $Y_{1}'=Y'$.
So $X_{1}''=X_{e}^{u} \backslash X_{1}'=X_{e}^{u} \backslash X'=X''$.

We are left to show that $H_0=I_0$ and $H_1=I_1$.
Note that $V(G_1)=I_0 \cup I_1 \cup Y'=H_0 \cup H_1 \cup Y_{1}'$.
It suffices to show that $H_0=I_0$ by $Y_{1}'=Y'$.

We claim that $X' \backslash\{v\} \neq \emptyset$.
Otherwise, $|X'|=1$, $|X''|=k$ and $|Y'|=k+2-|X'|=k+1$.
Hence $|X''|+|Y'|=k+k+1=2k+1$, a contradiction to (\ref{36}).

Then $|X_{1}'|=|X'|\geq 2$. By (\ref{23}), $|Y_{1}'|=k+2-|X_{1}'|\leq k$.

If $I_{0}\backslash\{u\}=\emptyset$,
then we will show that $H_{0}\backslash\{u\}=\emptyset$ too.
If so, then $H_0=I_0$. Suppose to the contrary that $H_{0}\backslash\{u\} \neq \emptyset$.
Then $H_{0}\backslash\{u\}\subseteq I_1$.
Since there is no edge joining $X'$ and $I_1$, every vertex in $X_{1}'$ has
no neighbor in $H_{0}\backslash\{u\}$.
However, $G$ is $k$-connected and $N(z) \cap (H_{0}\backslash\{u\})=\emptyset$.
Then every component of $G[H_{0}\backslash\{u\}]$ is connected to
at least $k$ vertices in $Y_{1}'\backslash \{z\}$.
Thus $|Y_{1}'\backslash \{z\}|\geq k$. So $|Y_{1}'|\geq k+1$,
contradicting that $|Y_{1}'|\leq k$.

If $I_{0}\backslash\{u\} \neq \emptyset$ and $H_{0}\backslash\{u\}=\emptyset$,
then $I_{0}\backslash\{u\}\subseteq H_1$.
By symmetry, we can give a similar discussion to the previous paragraph.

Suppose that $I_{0}\backslash\{u\} \neq \emptyset$ and $H_{0}\backslash\{u\}\neq \emptyset$.
Since $u, v$ and $y$ have no neighbor in $I_{0}\backslash \{u\}$,
$(X'\backslash \{v\})\cup (Y'\backslash \{y\})$ is a $k$-vertex cut by (\ref{14}) and Claim \ref{7}.
Then every vertex in $X'\backslash \{v\}$ has a neighbor in
every component of $G[I_{0}\backslash\{u\}]$.
Similarly, because $u, v$ and $z$ have no neighbor in $H_{0}\backslash \{u\}$,
$(X_{1}'\backslash \{v\})\cup (Y_{1}'\backslash \{y\})$ is a $k$-vertex cut by (\ref{23}).
Then every vertex in $X_{1}'\backslash \{v\})$ has a neighbor in
every component of $G[H_{0}\backslash\{u\}]$.
Combining $X'=X_{1}'$ and $Y'=Y_{1}'$, we have $I_0 \backslash \{u\}=H_0 \backslash \{u\}$.
So $I_{0}=H_0$.
\hfill $\square$

\begin{figure}[h]
\centering
\includegraphics[height=4.5cm,width=4.5cm]{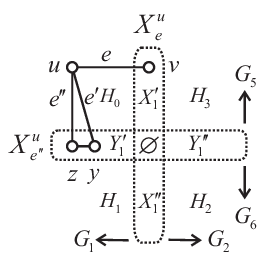}
\caption{\label{tu-07} $u$ is the only neighbor of $y$ and $z$ in $H_0\cup X_{1}'$.}
\end{figure}

Since both $uy$ and $uz$ are of type 1, $N(y) \cap (I_0 \cup X')=\{u\}$
and $N(z) \cap (H_0 \cup X_{1}')=\{u\}$. By $I_0=H_0$ and $X'=X_{1}'$,
we obtain that $N(y) \cap (H_0 \cup X_{1}')=N(z) \cap (H_0 \cup X_{1}')=\{u\}$
(as shown in Fig. \ref{tu-07}).

By the conclusion $(\ddag)$, $G[H_{1} \cup \{y, z\}]$ is an odd component
of $G-(X_{1}'' \cup (Y_{1}' \backslash\{y, z\}) \cup \{u\})$, yielding
$G-(X_{1}'' \cup (Y_{1}' \backslash\{y, z\}) \cup \{u\})$ has no perfect matching.
This contradicts that $G$ is $k$-factor-critical
as $|X_{1}'' \cup (Y_{1}' \backslash\{y, z\}) \cup \{u\}|=k$.

\smallskip
Then we have reached the final contradiction, which means that
our assumption that $\delta(G) \geq k+2$ must be false.
Therefore, $\delta(G)=k+1$. So the result follows.
\hfill $\square$

\section{Proof of Theorem \ref{main2}}

In this section, we prove that there are even many vertices of
degree $k+1$ in the minimal $k$-factor-critical claw-free graph
if it is $(k+1)$-connected.

\noindent{\bf Proof of Theorem \ref{main2}.}
By Theorem \ref{main1}, $\delta(G)=k+1$.
Denote by $V_{1}$ the set of vertices of degree $k+1$ in $G$.
Let $V_2=V(G)\backslash V_{1}$, and $G_{1} :=G[V_{1}]$ and $G_{2} :=G[V_{2}]$.
Then for every vertex $x$ of $G_{2}$, $d_{G}(x)\geq k+2$.
By Proposition \ref{prop1}, for every edge $e$ of $G_2$,
$e$ is either of type 1 or of type 2.
Let $E_1$ be the set of edges of type 1 in $G_2$ and $E_2=E(G_2)\backslash E_1$.
Then $E_2$ consists of edges of type 2.
We first prove the following claim.

\bigskip
\noindent \textbf{Claim 1.}
$G[E_1]$ is a forest.
\bigskip

Suppose to the contrary that $G[E_1]$ contains a cycle
$C=u_{1}u_{2}\cdots u_{t}u_{1}$ ($t \geq 3$).
Since for every edge $e=u_iu_j \in E(C)$, $e$ is of type 1,
$G$ has the configuration as shown in Fig. \ref{tu-02} (a) by Lemma \ref{lem1}.
Then there exists two $(k+1)$-vertex cuts $S_e\cup \{u_j\}$
and $S_e\cup \{u_i\}$ in $G$.
Let $X$ be the set of all such $(k+1)$-vertex cuts relative to
the edges of type 1 on $C$.

Among all these $(k+1)$-vertex cuts in $X$,
we choose one such that the removal of it
results in an odd component of the minimum order.

Let $X_{u_1u_2}^{u_2}$ be the $(k+1)$-vertex cut. And let
$H_{u_{2}u_{1}}$ be the odd component of $G-X_{u_1u_2}^{u_2}$ containing $u_{2}$
and $H_{u_{2}u_{1}}$ be of the minimum order.
Let $H_{1}'=G-X_{u_1u_2}^{u_2}-V(H_{u_{2}u_{1}})$ (as shown in Fig. \ref{tu-08} (a)).
Since $G[V(H_{1}')\cup \{u_1\}]$ is an odd component of $G-u_1u_2-S_{u_1u_2}$,
$|V(H_{1}')|$ is even.

\begin{figure}[h]
\centering
\includegraphics[height=5cm,width=9cm]{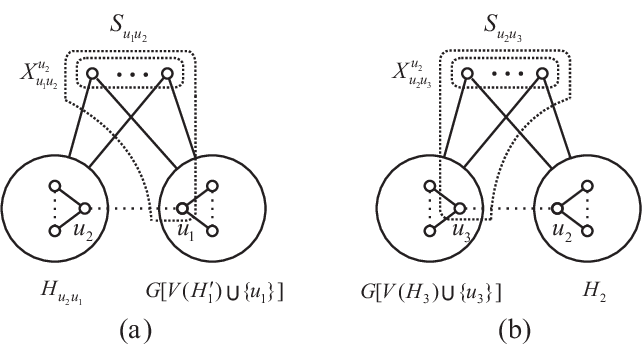}
\caption{\label{tu-08}
  (a) $X_{u_1u_2}^{u_2}$ is a $(k+1)$-vertex cut;
  (b) $X_{u_2u_3}^{u_2}$ is a $(k+1)$-vertex cut.}
\end{figure}

We consider another edge $u_{2}u_{3}$ on $C$. Let $X_{u_2u_3}^{u_2}=S_{u_2u_3}\cup \{u_3\}$.
Then $X_{u_2u_3}^{u_2}$ is a $(k+1)$-vertex cut of $G$.
Denote by $H_{2}$ the odd component of $G-X_{u_2u_3}^{u_2}$ containing $u_{2}$ and
let $H_{3}=G-X_{u_2u_3}^{u_2}-V(H_{2})$ (as shown in Fig. \ref{tu-08} (b)).
Because $G[V(H_{3})\cup \{u_3\}]$ is an odd component of $G-u_2u_3-S_{u_2u_3}$,
$|V(H_{3})|$ is even.

We claim that $X_{u_1u_2}^{u_2} \neq X_{u_2u_3}^{u_2}$. Otherwise,
$u_{3}\in X_{u_1u_2}^{u_2}$, $H_{u_{2}u_{1}}=H_{2}$ and $H_{1}'=H_{3}$.
Since $u_{1}$ has the only neighbor $u_{2}$ in $H_{u_{2}u_{1}}$
and $u_{3}$ has the only neighbor $u_{2}$ in $H_2$,
$u_{2}$ is the only neighbor of $u_{1}$ and $u_{3}$ in $H_{u_{2}u_{1}}$.
Then $(X_{u_1u_2}^{u_2} \backslash \{u_{1}, u_{3}\})\cup \{u_{2}\}$ is a
$k$-vertex cut, contradicting that $G$ is $(k+1)$-connected.

Next we present some notation below.

Let $T_{1}=X_{u_1u_2}^{u_2} \cap X_{u_2u_3}^{u_2}$,
$X_{1}'=X_{u_1u_2}^{u_2} \cap V(H_{2})$, $X_{1}''=X_{u_1u_2}^{u_2} \cap V(H_{3})$,
$Y_{1}'=X_{u_2u_3}^{u_2} \cap V(H_{u_{2}u_{1}})$ and $Y_{1}''=X_{u_2u_3}^{u_2} \cap V(H_{1}')$.
Then $X_{u_1u_2}^{u_2}=X_{1}'\cup X_{1}''\cup T_{1}$
and $X_{u_2u_3}^{u_2}=Y_{1}'\cup Y_{1}''\cup T_{1}$.
Let $J_{0}=V(H_{u_{2}u_{1}})$ $\cap$ $V(H_{2})$,
$J_{1}=V(H_{u_{2}u_{1}})\cap V(H_{3})$,
$J_{2}=V(H_{1}')\cap V(H_{3})$ and $J_{3}=V(H_{1}')\cap V(H_{2})$
(as shown in Fig. \ref{tu-09}). Since $u_1, u_3 \in N(u_2)$,
$u_{1}\in X_{1}'\cup T_{1}$ and $u_{3}\in Y_{1}'\cup T_{1}$.

\begin{figure}[h]
\centering
\includegraphics[height=4.5cm,width=4.5cm]{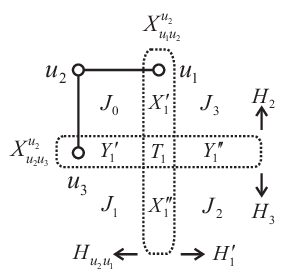}
\caption{\label{tu-09} $X_{u_1u_2}^{u_2}$
and $X_{u_2u_3}^{u_2}$ separate the graph $G$.}
\end{figure}

We will prove that $|X_{1}'|+|Y_{1}'|+|T_{1}|\geq k+2$.
Since $G$ is $(k+1)$-connected and $J_0\neq \emptyset$,
$|X_{1}'|+|Y_{1}'|+|T_{1}|\geq k+1$.
Suppose to the contrary that $|X_{1}'|+|Y_{1}'|+|T_{1}|=k+1$.
Since $N(u_{2}) \subseteq X_{1}'\cup Y_{1}'\cup T_{1}\cup J_0$
and $d_{G}(u_2)\geq k+2$, $N(u_2)\cap J_0 \neq \emptyset$.
Then $J_0 \backslash \{u_2\}\neq \emptyset$.
Noticing that $u_1$ has the only neighbor $u_2$ in $H_{u_{2}u_{1}}$
and $u_3$ has the only neighbor $u_2$ in $H_{2}$,
$u_2$ is the only neighbor of $u_1$ and $u_3$ in $J_0$.
Then $((X_{1}'\cup Y_{1}'\cup T_{1})\backslash \{u_1, u_3\}) \cup \{u_2\}$
is a $k$-vertex cut, contradicting that $G$ is $(k+1)$-connected.
Thus
\begin{equation}\label{45}
|X_{1}'|+|Y_{1}'|+|T_{1}|\geq k+2.
\end{equation}
Together with $|X_{u_1u_2}^{u_2}|+|X_{u_2u_3}^{u_2}|=2k+2$, we have
\begin{equation}\label{46}
|X_{1}''|+|Y_{1}''|+|T_{1}|\leq k.
\end{equation}
Since $G$ is $(k+1)$-connected, $J_{2}=\emptyset$.
Consequently, $J_{1}\cup X_{1}''=V(H_{3})$.
Then $|J_{1}\cup X_{1}''|$ is even as $|V(H_{3})|$ is even.

Let $H_{u_{3}u_{2}}=G[J_{1}\cup X_{1}''\cup \{u_{3}\}]$ and
$X_{u_2u_3}^{u_3}=(X_{u_2u_3}^{u_2} \backslash \{u_{3}\})\cup \{u_{2}\}$.
Then $X_{u_2u_3}^{u_3}$ is a $(k+1)$-vertex cut
relative the edge $u_2u_3$ of type 1 (as shown in Fig. \ref{tu-10}).
Since $G[J_{1} \cup X_{1}''$ $\cup$ $\{u_{3}\}]=G[V(H_{3}) \cup \{u_3\}]$
and $G[V(H_{3})\cup \{u_3\}]$ is an odd component of $G-u_2u_3-S_{u_2u_3}$,
$H_{u_{3}u_{2}}$ is connected.
Note that $G$ has the configuration corresponding to the edge $u_2u_3$ of type 1.

\begin{figure}[h]
\centering
\includegraphics[height=4.3cm,width=3.7cm]{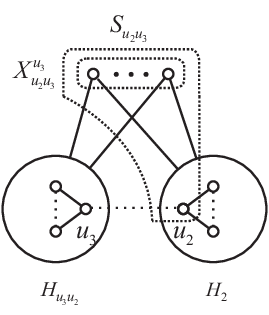}
\caption{\label{tu-10} $X_{u_2u_3}^{u_3}$ is a $(k+1)$-vertex cut.}
\end{figure}

By (\ref{46}),
$|X_{1}''|+|Y_{1}''|+|T_{1}|\leq k<|Y_{1}'|+|Y_{1}''|+|T_{1}|=k+1$.
Then $|X_{1}''|<|Y_{1}'|$. It follows that
$|J_{1}\cup X_{1}''\cup \{u_{3}\}|\leq|J_{1}\cup Y_{1}'|<|V(H_{u_{2}u_{1}})|$,
that is, $|V(H_{u_{3}u_{2}})|<|V(H_{u_{2}u_{1}})|$,
a contradiction to the minimality of $H_{u_{2}u_{1}}$.
So Claim 1 holds.

\bigskip
By Claim 1, we have
\begin{equation}\label{17}
|E_1|\leq |V(G[E_1])|-1.
\end{equation}
By Proposition \ref{prop2}, every vertex in $G_2$ is incident with at most
one edge of type 2. Then $E_2$ is a matching  of $G_2$. Thus
\begin{equation}\label{18}
%|V(G[E_1])|\leq |V_2|,\  |V(G[E_2])|\leq |V_2|.
2|E_2|=|V(G[E_2])|.
\end{equation}
%Note that the sum of degrees of all vertices in $G[E_i]$ is $2|E_i|$, $i=1, 2$.
Moreover, the sum of degrees of all vertices in $G_{2}$ is $2|E(G_2)|$.
By taking (\ref{17}) and (\ref{18}) into account, we have
\begin{center}
$2|E(G_2)|=2|E_1|+2|E_2|\leq 2(|V(G[E_1])|-1)+|V(G[E_2])|\leq 2(|V_2|-1)+|V_2|=3|V_2|-2$.
\end{center}

%Since there are at most $(k+1)|V(G_{1})|$ edges from $G_{1}$ to $G_{2}$
%and at least $(k+2)|V(G_2)|-2|E(G_2)|$ edges from
%$G_{2}$ to $G_{1}$, we have the following inequality

We can estimate the number of edges between $V_1$ and $V_2$ as follows.
\begin{center}
$(k+1)|V_1|\geq |E(V_1, V_2)|\geq (k+2)|V_2|-2|E(G_2)| \geq (k+2)|V_2|-(3|V_2|-2)$.
\end{center}
Then $(k+1)|V_1|\geq (k-1)|V_2|+2$. So $2k|V_1|\geq (k-1)(|V_1|+|V_2|)+2>(k-1)(|V_1|+|V_2|)$.
It follows that
\begin{center}
$\dfrac{|V_1|}{|V_1|+|V_2|}>\dfrac{k-1}{2k}$.
\end{center}

Therefore, $G$ has at least $\frac{k-1}{2k}|V(G)|$
vertices of degree $k+1$.
\hfill $\square$

\bigskip

If $k=2$, we can obtain the following corollary by Theorem \ref{main2},
which yields further evidence for Conjecture \ref{conj2}.

\begin{cor}\label{cubic}
Every $3$-connected minimal bicritical claw-free graph $G$ has at least $\frac{1}{4}|V(G)|$
vertices of degree three.
\end{cor}

\end{document}